\DeclareFontShape{OMX}{cmex}{m}{n}{
  <-7.5> cmex7
  <7.5-8.5> cmex8
  <8.5-9.5> cmex9
  <9.5-> cmex10
}{}
\newtheorem{remark}[theorem]{Remark}
\newtheorem{example}[theorem]{Example}
\newcommand{\dd}{\, \mathrm{d}}
\newcommand{\RR}{\mathbb{R}}
\newcommand{\CC}{\mathbb{C}}
\newcommand{\NN}{\mathbb{N}}
\newcommand{\XX}{\mathbb{X}}
\newcommand{\YY}{\mathbb{Y}}
\newcommand{\X}{\mathcal{X}}
\newcommand{\Y}{\mathcal{Y}}
\newcommand{\J}{\mathcal{J}}
\newcommand{\LpmX}{L^p (\mu;\mathcal{X})}
\newcommand{\LpTX}{L^p (0,T;\mathcal{X})}
\author{Martin Burger\thanks{Computational Imaging Group and Helmholtz Imaging, Deutsches Elektronen Synchrotron (DESY) and Fachbereich Mathematik, Universit\"at of Hamburg, Germany ({\tt martin.burger@desy.de})} \and Thomas Schuster\thanks{Department of Mathematics, Saarland University, Saarbr\"ucken, Germany ({\tt thomas.schuster@num.uni-sb.de}).} \and Anne Wald\thanks{Institute for Numerical and Applied Mathematics, University of G\"ottingen, Germany ({\tt a.wald@math.uni-goettingen.de}).}}
\title{Ill-posedness of time-dependent inverse problems in Lebesgue-Bochner spaces}
\begin{document}

\maketitle

\begin{abstract}
We consider time-dependent inverse problems in a mathematical setting using Lebesgue-Bochner spaces. Such problems arise when one aims to recover parameters
from given observations where the parameters or the data depend on time. There are various important applications being subject of current research that belong
to this class of problems. Typically inverse problems are ill-posed in the sense that already small noise in the data causes tremendous errors in the solution.
In this article we present two different concepts of ill-posedness: temporally (pointwise) ill-posedness and uniform ill-posedness with respect to the Lebesgue-Bochner setting. We investigate the two concepts by means of a typical setting consisting of a time-depending observation operator composed by a compact operator.
Furthermore we develop regularization methods that are adapted to the respective class of ill-posedness.
\end{abstract}

\begin{keywords} 
dynamic inverse problems, Lebesgue-Bochner spaces, ill-posedness, regularization methods, parameter identification
\end{keywords}

%%%%%%%%%%%%%%%%%%%%%%%%%%%%%%%%%%%%%%

\section{Introduction}

%%%%%%%%%%%%%%%%%%%%%%%%%%%%%%%%%%%%%%

Time-dependent inverse problems yield a large and versatile subfield of inverse problems, which has seen a growing interest in recent years. Considering an inverse problem, given by an operator equation 
\begin{displaymath}
 F(\vartheta) = y, \quad F: \mathcal{D}(F) \subseteq \mathbb{X} \rightarrow \mathbb{Y},
\end{displaymath}
the dependence on time may be reflected in the source $\vartheta \in \mathbb{X}$, in the data $y \in \mathbb{Y}$, or in the forward operator $F$. The dependence results for example from changes in the underlying physical setup or from an intrinsic time-dependence of the respective system, which evolves in time.

We shall see that typically the operator $F$ in dynamic inverse problems can be decomposed into two types of operators that preserve the causality. In order to compute the output $y(t)$ at a fixed time $t$ we 
apply operators of the form $A_t$ acting at fixed time or $B_{[0,t]}$ acting in a time interval. Again due to causality the output at time $t$ only depends on the restriction $f|_{[0,t]}$ to the initial time interval. We thus see  that we can approach such problems in a global or local setting.

Many time-dependent problems have been addressed and studied in relation to applications (see \cite{book_TDPIIP}), for instance, in dynamic computerized tomography \cite{C1-Hahn_estimation, C1-hahn_local}, magnetic resonance imaging \cite{Hammernik2018}, emmission tomography \cite{hashimoto2019dynamic}, magnetic particle imaging \cite{knopp2012magnetic, A1-KALTENBACHER;ET;AL:21, bathke2017improved, albers2023timedependent}, or structural health monitoring \cite{A1-KLEIN;ET;AL:21, C2-LAMBWAVES-BUCH:18}.  We state some recent examples for dynamic inverse problems in real-world applications in the following: 

\vspace*{2ex}

(a) \emph{Dynamic Computerized Tomography (DCT)}\\
The aim of CT is to recover the interior $\vartheta$ of an object from X-ray measurements. If the object undergoes a motion (e.g. if the patient is moving), then
the function $\vartheta$ to be recovered depends on time and the forward operator is given as
\begin{equation}\label{DCT} 
 F [\vartheta (t)] = S(t) R [\vartheta (t)].  
\end{equation}
Here $R$ denotes the 2D Radon transform acting on the spatial variable of $\vartheta (t,x)$ and $S(t)$ describes the measurement geometry. The observation operator $S(t)$
depends on $t$, since in this way, changes in the measurement process in time are also included in the mathematical model. In the notation above we have
\[   y(t) = A_t [ \vartheta ],   \] 
the problem is fully local in time.

In particular, we want to point out two approaches: The first one is to reconstruct the initial state of the object from data that has been collected while the object is changing in time, and taking into account the motion of the object (see, e.g., \cite{bh2014, bh2017}). In this case the motion of the object itself is not of interest. The second approach is to reconstruct the moving object, i.e., the object is to be reconstructed at a range of time points. Here, one usually has to deal with sparse data or limited angle problems \cite{bubba2017, burger_et_al_2017, Arridge22DIP}. Including motion, the Radon transform is now computed for a deformed version of $\vartheta$ over the whole time interval, which changes the setting to
\[   y(t) = A_t B_{[0,t]} [\vartheta ].  \]

(b) \emph{Dynamic Load Monitoring (DLM)}\\
DLM aims for computing loads in elastic structures from time-dependent sensor data that are acquired at the structure's surface. If the displacements $u(t,x)$ are small, then 
the wave propagation in an elastic material is described using Hooke's law by the Cauchy equation of motion
\begin{equation}\label{elastic-WE}
\rho \ddot{u}(t,x) - \nabla\cdot (\mathbb{C}:\varepsilon (u))(t,x) = f(t,x).
\end{equation}
Here $\mathbb{C}(x)$ denotes the elasticity tensor in $x$, $\rho$ is the mass density, $\varepsilon (u) = (\nabla u + \nabla u^\top)/2$ is the linearized Green strain tensor,
and $f(t,x)$ is a volume body force. The hyperbolic PDE system \eqref{elastic-WE} is uniquely
solvable if appropriate initial and boundary values are given. \emph{Dynamic load monitoring}
means computing $f$ (the load) from (partial) measurements of displacements $u$ at the boundary of a structure. 
This is a linear inverse problem with forward operator
\[   F(f) := S(t) [L f ],   \]
and the operator $L$ maps a source term $f$ to the (unique, weak) solution $u =: Lf$ of \eqref{elastic-WE} equipped with appropriate initial and boundary values, whereas $S(t)$ is the mathematical model of the data acquisition process. E.g., $S(t)$ equals the the trace operator $\gamma$,
\[  \gamma [g] = g_{|_{\Gamma}},   \]
with $\Gamma\subset \partial\Omega$
or 
\[   S(t) [g] = \Big( \int_\Gamma \langle g(x), \chi_j (x)\rangle \,\mathrm{d} s_x \Big)_{j=1,\ldots,J},  \]
where $\chi_j$ is the characteristic function of sensor $j$.
Note that in these examples $S$ is actually independent of time, but can be extended (by identical copies) as an operator on a time-dependent $g$ defined on $[0,T]\times \partial \Omega$, yielding a trace on $\Gamma_T = [0,T]\times \Gamma$. Let us mention that $L$ depends on the whole time interval, hence we effectively compute the data via 
\[   y(t) = A_t [B_{[0,t]} f ].   \]
  
%In \cite{BINDER;SCHUSTER:15} the authors compute the temporal average in the middle layer of an
%elastic plate
%\[   g(x_1,x_3) := [\Lambda f](x,z) := \int_0^T f(t,x_1,0,x_3)\, \dd t   \]

%(c) \emph{4D Positron Emission Tomography (DPET)}\\[2ex]
%{\color{red} Martin?}\\[2ex]

(c) \emph{Magnetic Particle Imaging (MPI)}\\
MPI is a relatively novel medical imaging technique, e.g., to monitor blood vessels in a patient. Magnetic nanoparticles are injected into the bloodstream and distribute inside the patient's body. A strong external magnetic field with a field free point (FFP) is applied such that the nanoparticles' magnetic moment vectors align with the field lines of this field. Only in the field free point, where the applied field is very weak, the particles can move freely. When the FFP moves around the field of view, these particles inside the FFP abruptly change their magnetization. The resulting change in the total magnetic field induces voltages in the receive coils at each time point $t$ during the measurement, which serve as data (see also \cite{knopp2012magnetic}).\\
The goal in MPI is to reconstruct the concentration $c$ of the magnetic particles inside the body from the measured voltages. The physical model is given by the $k=1,\ldots,K$ integral equations
\begin{displaymath}
 v_k(t) = \int_0^t \tilde{a}_k(t-\tau) \int_{\Omega} c(x) s_k(x,\tau) \dd x \ \dd \tau, \quad s_k(x,t) = p_k^\mathrm{R}(x)^T \partial_t \overline{m}(x,t),
\end{displaymath}
where $K$ is the number of receive coils, $\tilde{a}_k$, $k=1,\ldots,K$, are transfer functions, $p^R_k$ the coil sensitivity of the $k$-th coil, and $\overline{m}$ is the mean magnetization. The function $s_k$ is called the system function and represents a kind of induction potential for the $k$-th coil. In practice, the system function is measured in a time-consuming calibration process. This yields two types of inverse problems. First, the actual imaging problem is to reconstruct the concentration $c$ from the time-dependent data. Secondly, to avoid the expensive calibration, the model-based reconstruction of the system function from data that was obtained for known particle concentrations yields an inverse problem where both the source and the data are time-depending. 
In both cases, the forward operator can be formulated as a composition 
\begin{displaymath}
 F[f](t) = S  I(t)[f(t)],
\end{displaymath}
where $S$ represents the convolution in time and $I(t)$ the integral operator in space given a time-varying weight. Note that Lebesgue-Bochner spaces are a convenient choice for the source space of the inverse problem for the system function. The data space is given by the Lebesgue space $L^s([0,T])$.
In this case we can write  the problem in the form
 \[   y(t) =B_{[0,t]} A_t [f ]. \]  

\vspace*{2ex}

The concept of ill-posedness for inverse problems has been discussed in detail in the literature. Linear ill-posed problems have been classified by Hadamard \cite{hadamard1923}.
For nonlinear forward operators, the definition has to be adapted to the local character of nonlinear operators, which has been done by Hofmann and Scherzer in \cite{hs94, hs98, bh00}. \\
Here, we address both linear and nonlinear problems, i.e., our definition of ill-posedness will be based on the definition by Hofmann and Scherzer.

\vspace*{2ex}

{\color{black}
Since the dependence on time of a quantity is of an entirely different nature than the dependence on space, it is not surprising that this difference is also reflected in the respective mathematical modelling. Regarding inverse problems, the choice of Lebesgue-Bochner spaces as source or data space yields an adequate setting, since they admit a different treatment of these two types of variables. A prominent example where this setting occurs naturally is parameter identification for parabolic linear partial differential equations. The differing roles of the temporal and spatial variables in this context are addressed in \cite[Ch.~23.1]{zeidler2013linear} for a classical initial boundary value problem for the heat equation,
\begin{align*}
 \partial_t u - \Delta u &= f &&\text{in } \Omega \times (0,T), \\
                       u &= 0 &&\text{on } \partial\Omega \times [0,T], \\
                  u(x,0) &= u_0(x) && \text{in } \Omega.
\end{align*}
We summarize some of the properties from \cite[Ch.~23.1]{zeidler2013linear}:
\begin{itemize}
 \item[(a)] For a fixed time $t$, the function $x \mapsto u(x,t)$ is an element of a Sobolev space $V$ and we denote this function by $u(t)$.
 \item[(b)] If we now vary $t \in [0,T]$, we obtain the function $t \mapsto u(t)$. In this way, we obtain a function with values in a Banach space $V$, in contrast to the real-valued function $(x,t) \mapsto u(x,t)$.
\end{itemize}
Multiplication with a test function $v \in V$ and integration over $\Omega$ yields 
\begin{equation} \label{ex_variationalformulation}
 \frac{\mathrm{d}}{\mathrm{d}t} \left( u(t), v \right)_H + a\left(u(t),v\right) = \left( f(t), v \right)_H \quad \text{in } (0,T) \text{ for all } v\in V, \quad u(0) = u_0 \in H 
\end{equation}
with
\begin{displaymath}
 \begin{split}
  a\left(w,v\right) &:= \int_{\Omega} \sum_{i=1}^N \partial_{x_i} w_i(x) \partial_{x_i} v(x) \mathrm{d}\, x, \\
  \left(f(t),v\right)_H &:= \int_{\Omega} f(x,t) v(x) \mathrm{d}\, x.
 \end{split}
\end{displaymath}
This variational formulation shows the necessity to use two function spaces $V$ and $H$:
\begin{itemize}
 \item[(c)] The time-derivative yields the space $H$, whereas $V$ is obtained from the spatial derivative $-\Delta$ and the boundary condition. In this example for the heat equation, we have $V \subseteq H$ and $V$ is dense in $H$. In this context, we use the \emph{Gelfand} or \emph{evolution triple}
 \begin{displaymath}
  V \subseteq H \subseteq V^*.
 \end{displaymath}
 \item[(d)] The time-derivative $\tfrac{\mathrm{d}}{\mathrm{d}t}$ is interpreted as a generalized derivative, i.e., the variational equation \eqref{ex_variationalformulation} has to be satisfied only for almost all $t \in (0,T)$.
 \item[(e)] These considerations result in the choice
   \begin{displaymath}
    u \in W^1_2(0,T;V,H) = H^{1}(0,T;V,H)
   \end{displaymath}
   as the state space for \eqref{ex_variationalformulation}.
\end{itemize}

Mathematical frameworks, particularly suited for parameter identification for partial differential equations, have been presented in \cite{kaltenbacher17, akar16, Hoffmann_2022}. In \cite{kaltenbacher17, akar16}, the focus is on time-depentent PDEs and Lebesgue-Bochner spaces are used to formulate the respective mathematical problems.

%In this work, we address inverse problems in Lebesgue-Bochner spaces and consider ill-posedness and suitable regularization methods. 

}

\emph{Outline.} Section 2 provides the reader with all necessary preliminaries about Lebesgue-Bochner spaces. We particularly discuss criteria for relatively compact subsets in such spaces. In Section 3 we develop the aforementioned concepts for pointwise and uniform ill-posedness of time-dependent inverse problems. As an example we consider time-dependent observations of a compact operator which applies, e.g., to dynamic CT. Corresponding to these concepts, we construct two types of regularization concepts in Section 4. For both methodologies we present examples: variational tracking, classical variational techniques and Kaczmarz-based regularization. The article ends with an outlook to future research in the field.

%%%%%%%%%%%%%%%%%%%%%%%%%%%%%%%%%%%%%%%%%%%%%%%%%%%%%%%%%%%%%%%%%%%%

\section{Some preliminaries about Lebesgue-Bochner spaces}

%%%%%%%%%%%%%%%%%%%%%%%%%%%%%%%%%%%%%%%%%%%%%%%%%%%%%%%%%%%%%%%%%%%%

We start with a short introduction to the theory of Lebesgue-Bochner spaces. We use the notation $\mathbb{K} \in \lbrace \RR,\CC \rbrace$ and let $1 \leq p, p^{*} < \infty$ with $\frac{1}{p}+\frac{1}{p^*}=1$.
If not otherwise specified, let $\mathcal{X}$ be a Banach space with norm $\lVert \cdot \rVert_{\mathcal{X}}$. Its dual is denoted by $\mathcal{X}^*$.

\vspace*{2ex}

Recall that a function $u : \Omega \rightarrow \mathcal{X}$ is called \emph{strongly $\mu$-measurable}, if there is a sequence $\lbrace u_n : \Omega \rightarrow \mathcal{X} \rbrace$ of simple functions such that $u$ is its pointwise limit, i.e., $\lim_{n\rightarrow \infty} u_n(t) = u(t)$ for $t \in \Omega$ a.e.

\vspace*{2ex}

For a finite measure space $(\Omega,\mu)$ with a measure $\mu$ we define the space
\begin{displaymath}
 L^p(\mu; \mathcal{X}) := \left\lbrace u : \Omega \rightarrow \mathcal{X} \, | \, u \text{ is strongly $\mu$-measurable and } \lVert u \rVert_{p,\mathcal{X}} < \infty  \right\rbrace,
\end{displaymath}
where
\begin{equation}
 \lVert u \rVert_{p,\mathcal{X}} := \left( \int_{\Omega} \lVert u \rVert_{\mathcal{X}}^p \dd \mu \right)^{\frac{1}{p}}.
\end{equation}
Note that we identify functions that are equal $\mu$-almost everywhere in $[0,T]$.
The space $L^p(\mu; \mathcal{X})$ is called a \emph{Lebesgue-Bochner space} or just \emph{Bochner space}.
If endowed with the norm $\lVert \cdot \rVert_{p,\mathcal{X}}$, the space $L^p(\mu; \mathcal{X})$ is a Banach space.
For the special case $\mathcal{X}=\mathbb{R}$ we briefly write $L^p (\mu)$.

If $\mathcal{X}$ is a Hilbert space and $p=2$, then $L^2(\mu; \mathcal{X})$ is also a Hilbert space with inner product
\begin{displaymath}
 \left( u_1, u_2 \right)_{L^2(\mu; \mathcal{X})} = \int_{\Omega} (u_1, u_2)_{\mathcal{X}} \dd \mu.
\end{displaymath}

Particularly in applications, the measure space is often given as the time interval $\Omega = [0,T] \subseteq \RR$, $T> 0$, with the Lebesgue measure. The respective Bochner spaces
\begin{equation}
 L^p(0,T; \mathcal{X}) := \left\lbrace u : (0,T) \rightarrow \mathcal{X} \, | \, u \text{ is measurable and } \lVert u \rVert_{p,\mathcal{X}} < \infty  \right\rbrace
\end{equation}
play an important role in the theory of nonlinear operator equations or evolution equations. The following statement is taken from \cite{zeidler2013linear}.

\vspace*{2ex}

\begin{proposition}
 Let $\widetilde{\mathcal{X}}$ be a Banach space over $\mathbb{K}$.
 \begin{itemize}
  \item[(i)] The space $C^m([0,T]; \mathcal{X})$, $m=0,1,...$, of all continuous functions $u: [0,T] \rightarrow \mathcal{X}$ with continuous derivatives up to order $m$ on $[0,T]$ with the norm 
  \begin{displaymath}
   \lVert u \rVert := \sum_{i=0}^m \max_{t \in [0,T]} \lvert D^{(i)}u(t) \rvert,
  \end{displaymath}
 where $D^{(i)}$ denotes the $i$-th derivative, is a Banach space over $\mathbb{K}$.
  \item[(ii)] The set of all step functions $\sigma: [0,T] \rightarrow \mathcal{X}$ is dense in $L^p(0,T; \mathcal{X})$.
  \item[(iii)] The space $C([0,T]; \mathcal{X})$ is dense in $L^p(0,T; \mathcal{X})$ and the respective embedding is continuous.
  \item[(iv)] $L^p(0,T; \mathcal{X})$ is \begin{itemize}
                                          \item separable, if $\mathcal{X}$ is separable,
                                          \item uniformly convex, if $1 < p < \infty$ and $\mathcal{X}$ is uniformly convex.
                                         \end{itemize}
  \item[(v)] If $X \hookrightarrow \widetilde{\mathcal{X}}$ is continuous, then
   \begin{displaymath}
    L^q(0,T; \mathcal{X}) \hookrightarrow L^r(0,T; \widetilde{\mathcal{X}}), \quad 1 \leq q \leq r \leq \infty,
   \end{displaymath}
   is also continuous.
 \end{itemize}
\end{proposition}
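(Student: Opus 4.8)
The plan is to treat the five items essentially independently, each by a standard argument from the theory of vector-valued integration; since the proposition is quoted from \cite{zeidler2013linear}, the role of the proof is mainly to exhibit the underlying mechanisms.

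For (i) I would prove completeness directly. Given a Cauchy sequence $(u_n)$ in $C^m([0,T];\mathcal X)$, each derived sequence $(D^{(i)}u_n)_n$, $0\le i\le m$, is Cauchy in the uniform norm, hence converges uniformly to a function $v_i:[0,T]\to\mathcal X$; here one uses that $\mathcal X$ is complete (so the pointwise limits exist) and that a uniform limit of continuous $\mathcal X$-valued functions is continuous. A telescoping argument together with the fundamental theorem of calculus for the $\mathcal X$-valued Riemann integral then identifies $v_i=D^{(i)}v_0$, so the limit lies in $C^m([0,T];\mathcal X)$; the norm axioms are immediate.

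For (ii) I would start from the very definition of strong $\mu$-measurability together with $\lVert u\rVert_{p,\mathcal X}<\infty$: approximate $u$ pointwise a.e.\ by simple functions, truncate them (e.g.\ discard the set where $\lVert u_n\rVert_{\mathcal X}>2\lVert u\rVert_{\mathcal X}$) so as to obtain a dominating function, and pass to the limit by dominated convergence; on $[0,T]$ with Lebesgue measure a simple function coincides, up to a null set, with a step function, which is exactly the claim. Item (iii) then follows by approximating characteristic functions of subintervals in $L^p(0,T)$ by continuous piecewise affine functions and combining this with (ii); the continuity of the embedding is the elementary bound $\lVert u\rVert_{p,\mathcal X}\le T^{1/p}\max_{t\in[0,T]}\lVert u(t)\rVert_{\mathcal X}$ coming from H\"older's inequality on the finite interval.

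For (iv), separability follows from (ii): taking a countable dense set $D\subseteq\mathcal X$, the step functions with values in $D$ and breakpoints at rational times form a countable set, and density is obtained by first approximating by step functions and then replacing their finitely many values by nearby elements of $D$. Uniform convexity for $1<p<\infty$ is the one genuinely deeper point; I would derive it from Clarkson-type inequalities for $L^p(\mu;\mathcal X)$, reducing the modulus-of-convexity estimate to the combination of the corresponding estimate in $\mathcal X$ and the scalar $L^p$ case — this reduction is the main obstacle, and it is the step I would leave to the cited literature rather than reproduce in full. Finally, (v) is again a pointwise estimate plus H\"older: continuity of $\mathcal X\hookrightarrow\widetilde{\mathcal X}$ gives $\lVert u(t)\rVert_{\widetilde{\mathcal X}}\le C\lVert u(t)\rVert_{\mathcal X}$ for a.e.\ $t$, and integrating this while comparing the time-integrability exponents by H\"older's inequality on the finite interval $[0,T]$ yields the asserted continuous embedding, with the case of an infinite exponent handled by passing to the essential supremum.
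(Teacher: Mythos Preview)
The paper does not give its own proof of this proposition: it is explicitly quoted from \cite{zeidler2013linear} and stated without argument, so there is nothing to compare your approach against. Your sketches for (i)--(iv) are the standard ones and would constitute a correct proof once written out.

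One point you should watch in (v): as stated in the paper the inequality reads $1\le q\le r\le\infty$, but on a finite interval the H\"older argument you invoke gives the embedding $L^q(0,T;\mathcal X)\hookrightarrow L^r(0,T;\widetilde{\mathcal X})$ only for $r\le q$, not $q\le r$ (e.g.\ $L^1(0,T;\mathcal X)$ does not embed into $L^\infty(0,T;\widetilde{\mathcal X})$). Your outline is correct for the embedding that actually holds; the direction of the inequality in the displayed line is a typo in the paper, and your write-up should flag and correct it rather than silently ``prove'' the false direction.
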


\vspace*{2ex}

The H\"older inequality transfers from Lebesgue to Bochner spaces and is used to derive the dual space of a Bochner space, see, e.g., \cite{zeidler2013linear}. 

\vspace*{2ex}

\begin{proposition}
 For $1 < p < \infty$ and $u \in L^p(0,T; \mathcal{X})$, $v \in L^{p^*}(0,T; \mathcal{X}^*)$, the H\"older inequality
 \begin{displaymath}
  \int_{0}^T \left\lvert \big\langle v(t), u(t) \big\rangle_{\mathcal{X}^* \times \mathcal{X}} \right\rvert \dd t \leq \left( \int_0^T \lVert v(t) \rVert_{\mathcal{X}^*}^{p^*} \dd t \right)  \cdot \left( \int_0^T \lVert u(t) \rVert_{\mathcal{X}}^{p} \dd t \right)
 \end{displaymath}
 holds and we have
 \begin{displaymath}
  \big( L^p(0,T; \mathcal{X}) \big)^* = L^{p^*}(0,T; \mathcal{X}^*).
 \end{displaymath}
\end{proposition}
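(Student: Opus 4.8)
The plan is to establish the two assertions separately. The H\"older inequality is essentially a pointwise estimate followed by an application of the scalar H\"older inequality, whereas the identification of the dual space is the genuinely substantial part and, to hold as stated, needs a hypothesis on $\mathcal{X}$ that is implicit in the cited reference.

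For the H\"older inequality I would first check that the scalar function $t\mapsto\langle v(t),u(t)\rangle_{\mathcal{X}^*\times\mathcal{X}}$ is Lebesgue measurable: since $u$ and $v$ are strongly measurable they are a.e.\ pointwise limits of simple functions $u_n$, $v_n$, and each $\langle v_n(t),u_n(t)\rangle$ is a finite linear combination of indicators, so the pairing is an a.e.\ limit of measurable scalar functions. The defining property of the dual norm gives the pointwise bound $|\langle v(t),u(t)\rangle|\le\|v(t)\|_{\mathcal{X}^*}\|u(t)\|_{\mathcal{X}}$ for a.e.\ $t$, and the nonnegative scalar functions $t\mapsto\|v(t)\|_{\mathcal{X}^*}$ and $t\mapsto\|u(t)\|_{\mathcal{X}}$ lie in $L^{p^*}(0,T)$ and $L^p(0,T)$ by the definition of the Bochner norms. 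Integrating the pointwise bound and applying the classical H\"older inequality on $(0,T)$ yields $\int_0^T|\langle v(t),u(t)\rangle|\dd t\le\|v\|_{p^*,\mathcal{X}^*}\|u\|_{p,\mathcal{X}}$, i.e.\ the claimed estimate. In particular $u\mapsto\int_0^T\langle v(t),u(t)\rangle\dd t$ is a bounded linear functional on $L^p(0,T;\mathcal{X})$ of norm at most $\|v\|_{p^*,\mathcal{X}^*}$, so we obtain a norm-nonincreasing (and, by a short argument, injective) linear map $\iota\colon L^{p^*}(0,T;\mathcal{X}^*)\to(L^p(0,T;\mathcal{X}))^*$.

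It remains to show that $\iota$ is an isometric isomorphism; here one must assume, as is implicit in the cited reference, that $\mathcal{X}$ is reflexive (or more generally that $\mathcal{X}^*$ has the Radon--Nikod\'ym property). The main obstacle is surjectivity. Given $\Phi\in(L^p(0,T;\mathcal{X}))^*$, define an $\mathcal{X}^*$-valued set function by $\langle\nu(E),x\rangle:=\Phi(\mathbbm{1}_E\,x)$ for $x\in\mathcal{X}$ and measurable $E\subseteq(0,T)$; one verifies that $\nu$ is countably additive, absolutely continuous with respect to Lebesgue measure, and of bounded variation (the variation estimate follows from $\|\sum_i\mathbbm{1}_{E_i}x_i\|_{p,\mathcal{X}}\le T^{1/p}\max_i\|x_i\|_{\mathcal{X}}$ together with $\|\Phi\|$). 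The vector-valued Radon--Nikod\'ym theorem then produces $v\in L^1(0,T;\mathcal{X}^*)$ with $\nu(E)=\int_E v\dd t$, whence $\Phi(u)=\int_0^T\langle v(t),u(t)\rangle\dd t$ holds first for $\mathcal{X}$-valued step functions $u$ and then, by the density of the step functions in $L^p(0,T;\mathcal{X})$ stated above, for all $u$. Testing $\Phi$ against truncated functions $u(t)=\mathbbm{1}_{\{\|v(t)\|_{\mathcal{X}^*}\le n\}}\|v(t)\|_{\mathcal{X}^*}^{p^*-1}w(t)$, where a measurable selection furnishes $w(t)$ in the unit ball of $\mathcal{X}$ with $\langle v(t),w(t)\rangle$ close to $\|v(t)\|_{\mathcal{X}^*}$, and letting $n\to\infty$, shows $v\in L^{p^*}(0,T;\mathcal{X}^*)$ with $\|v\|_{p^*,\mathcal{X}^*}\le\|\Phi\|$. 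Combined with the reverse estimate from the H\"older inequality and the injectivity of $\iota$, this gives the isometric identification $(L^p(0,T;\mathcal{X}))^*=L^{p^*}(0,T;\mathcal{X}^*)$. The delicate points are the measurable selection and, above all, the appeal to the vector Radon--Nikod\'ym theorem: this is exactly where reflexivity --- or at least the Radon--Nikod\'ym property of $\mathcal{X}^*$ --- is indispensable, the identification of the dual being false for general Banach spaces $\mathcal{X}$.
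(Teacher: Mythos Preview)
The paper does not actually prove this proposition: it is stated as a preliminary and simply attributed to \cite{zeidler2013linear}, with no argument given. Your proposal therefore goes well beyond the paper's treatment. The outline you give is correct and standard: the H\"older part is exactly the pointwise duality estimate followed by scalar H\"older, and the dual-space identification is the Radon--Nikod\'ym argument you sketch. You are also right to flag that the duality $(L^p(0,T;\mathcal{X}))^*=L^{p^*}(0,T;\mathcal{X}^*)$ requires $\mathcal{X}^*$ to have the Radon--Nikod\'ym property (in particular it holds for reflexive $\mathcal{X}$), an assumption the paper leaves implicit in the reference but uses freely later (e.g.\ in the proof of Theorem~\ref{T-reg-bound-compact}, where reflexivity of $\mathcal{X}$ and $\mathcal{Y}$ is explicitly assumed).

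One minor point worth noting, since you are effectively supplying the missing proof: the inequality as displayed in the paper is missing the exponents $1/p^*$ and $1/p$ on the two integrals on the right-hand side. Your version $\|v\|_{p^*,\mathcal{X}^*}\|u\|_{p,\mathcal{X}}$ is the correct one; the displayed form is a typographical slip in the paper, not something you should try to reproduce.
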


Regarding ill-posed problems, compact operators are of special interest since their inverse is only continuous if the operator has finite dimensional range.
This is why the characterization of relatively compact sets in Lebesgue-Bochner spaces is very important. 
Recall that an operator $T: \mathcal{X} \rightarrow \mathcal{Y}$ between Banach spaces is compact, if it maps compact subsets in $\mathcal{X}$ to relatively compact subsets of $\mathcal{Y}$. Relatively compact subsets in Lebesgue-Bochner spaces have been characterized by Diaz and Mayoral \cite{diaz1999compactness} as well as recently in the two articles \cite{jvn07, jvn14} by van Nerveen. To present the main result from \cite{jvn14} we need some further definitions.

A set $V\subset \LpmX$ is called \emph{uniformly $L^p$-integrable} if
\begin{equation}\label{eq-uniformly-lp-integrable}  \lim_{r\to\infty} \sup_{f\in V} \|\chi_{\|f(t)\| >r} f\|_p = 0 , \end{equation}
where $\chi_{\|f(t)\| >r}$ is the characteristic function of the set $\{t :\|f(t)\|>r\}$. The set $V$ is called \emph{uniformly tight},
if for all $\varepsilon>0$ there exists a compact set $\mathcal{K}\subset \mathcal{X}$ such that
\begin{equation}\label{eq-uniformly-tight} \sup_{f\in V} \mu (\{ t : f(t)\not\in \mathcal{K}\})\leq \varepsilon,  \end{equation}
and it is called \emph{scalarly relatively compact} if for all $x^*\in \mathcal{X}^*$ the set $\{ t\mapsto \langle f(t),x^* \rangle : f\in V\}$
is relatively compact in $L^p (\mu)$.

\vspace*{1ex}

\begin{proposition}[{\cite[Theorem 1]{jvn14}}]\label{P-set-rel-comp}
Let $1\leq p < \infty$. A subset $V\subset \LpmX$ is relatively compact if and only if it is uniformly $L^p$-integrable, uniformly tight, and scalarly relatively compact.
\end{proposition}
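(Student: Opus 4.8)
Suppose $V$ is relatively compact, hence totally bounded. Scalar relative compactness is immediate, since $f\mapsto\langle f(\cdot),x^*\rangle$ is bounded and linear from $\LpmX$ to $L^p(\mu)$ for each $x^*\in\mathcal{X}^*$. Likewise $f\mapsto\|f(\cdot)\|_{\mathcal{X}}$ is non-expansive into $L^p(\mu)$, so $\{\|f(\cdot)\|_{\mathcal{X}}:f\in V\}$ is relatively compact there, and relatively compact subsets of a scalar $L^p$-space are uniformly $L^p$-integrable (cover by finitely many balls, apply dominated convergence at the centres, absorb the remainder by Chebyshev's inequality); this gives uniform $L^p$-integrability of $V$. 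Uniform tightness needs a little more: a single strongly $\mu$-measurable function is uniformly tight (by Egorov together with the fact that a uniform limit of simple functions has totally bounded, hence precompact, essential range), and to make the compact set uniform over $V$ while absorbing small $L^p$-perturbations I would, at each scale $1/j$, take a finite subcover of $\overline V$ by sets $\{h:\int_\Omega\min(\mathrm{dist}(h(t),\mathcal K),1)^p\dd\mu<\delta_j\}$ with $\mathcal K$ a compact set capturing the values of one function, let $\mathcal L^{(j)}$ be the (compact) union of the finitely many such $\mathcal K$'s, and set $\mathcal K:=\bigcap_j\{x:\mathrm{dist}(x,\mathcal L^{(j)})\le 1/j\}$, which is closed and totally bounded, hence compact, and misses each $f\in V$ on a set of measure $<\varepsilon$ by summing the level estimates.

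\textbf{Sufficiency: setup.} Assume the three conditions; I will extract from an arbitrary sequence $(f_m)\subset V$ a subsequence that is Cauchy in $\LpmX$, which suffices by completeness. Combining uniform $L^p$-integrability and uniform tightness, for each $k\in\NN$ pick $r_k\ge 1$ with $\sup_{f\in V}\|\chi_{\|f(t)\|>r_k}f\|_p\le 2^{-k}$ and then a compact $\mathcal K_k\subset\mathcal X$ with $\sup_{f\in V}\mu(\{f(t)\notin\mathcal K_k\})\le 2^{-k}r_k^{-p}$; set $C_k:=(\mathcal K_k\cap\{\|x\|\le r_k\})\cup\{0\}$, which is compact, and $G_k^f:=\{t:f(t)\in C_k\}$. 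A short computation (splitting according to whether $\|f(t)\|\le r_k$) yields the \emph{key estimate} $\int_{\Omega\setminus G_k^{f'}}\|f(t)\|_{\mathcal X}^p\dd\mu\le c\,2^{-k}$ for \emph{all} $f,f'\in V$, with $c=c(p)$: the exceptional set of $f'$ is small by tightness at level $k$, and the part where $f$ is large is small by uniform integrability at level $k$. Next, $\mathcal X_0:=\overline{\mathrm{span}}\bigcup_k C_k$ is separable, so there is a norming sequence $(x_n^*)$ for $\mathcal X_0$ with $\|x_n^*\|\le 1$, extended by Hahn--Banach to $\mathcal X^*$; thus $\|x\|_{\mathcal X}=\sup_n|\langle x,x_n^*\rangle|$ for $x\in\mathcal X_0$. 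By scalar relative compactness and a diagonal argument over $n$, pass to a subsequence $(f_{m_j})$ along which $\langle f_{m_j}(\cdot),x_n^*\rangle$ converges in $L^p(\mu)$ for every $n$.

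\textbf{Sufficiency: the Cauchy estimate.} Fix $k$ and write $\Omega=G\cup(\Omega\setminus G)$ with $G:=G_k^{f_{m_i}}\cap G_k^{f_{m_j}}$. On $\Omega\setminus G=(\Omega\setminus G_k^{f_{m_i}})\cup(\Omega\setminus G_k^{f_{m_j}})$ the key estimate bounds $\int_{\Omega\setminus G}\|f_{m_i}-f_{m_j}\|_{\mathcal X}^p\dd\mu$ by $c'2^{-k}$ uniformly in $i,j$. On $G$ both values lie in the compact set $C_k$, where I use the lemma that a norming sequence is \emph{finitely approximately norming} on a norm-compact set: for every $\delta>0$ there is $N=N(k,\delta)$ with $\|x-y\|_{\mathcal X}\le\max_{n\le N}|\langle x-y,x_n^*\rangle|+\delta$ for all $x,y\in C_k$ (a direct compactness-and-contradiction argument). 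Hence
\[ \int_{G}\|f_{m_i}-f_{m_j}\|_{\mathcal X}^p\dd\mu\le 2^{p-1}\Big(\sum_{n=1}^{N}\big\|\langle f_{m_i}(\cdot),x_n^*\rangle-\langle f_{m_j}(\cdot),x_n^*\rangle\big\|_p^p+\delta^p\mu(\Omega)\Big), \]
and the first sum tends to $0$ as $i,j\to\infty$ (finitely many Cauchy sequences in $L^p(\mu)$) while $\delta$ is arbitrary. Therefore $\limsup_{i,j\to\infty}\|f_{m_i}-f_{m_j}\|_p^p\le c'2^{-k}$ for every $k$, so $(f_{m_j})$ is Cauchy, hence convergent.

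\textbf{Main obstacle.} The delicate point — and the one forcing all three hypotheses to be used jointly — is that one \emph{cannot} reduce to compact-valued functions uniformly in $f$: the family obtained by truncating each $f$ to the set where it lands in $C_k$ need not be scalarly relatively compact, so the scalar compactness theory is not directly applicable to a ``compactified'' model. The argument above instead runs the Cauchy estimate on the original sequence, absorbs the non-uniform exceptional sets through the quantitative tightness/integrability bounds at scale $k$, and transfers $L^p$-convergence of the countably many scalars $\langle f_{m_j}(\cdot),x_n^*\rangle$ back to the vector-valued functions via finite approximate norming on $C_k$; tightness is what places the values into $C_k$ in the first place, and uniform $L^p$-integrability is what makes the tail outside it negligible.
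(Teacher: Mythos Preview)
The paper does not prove this proposition at all: it is quoted verbatim from \cite{jvn14} and immediately used as a black box (the only related argument in the paper is Lemma~\ref{L-fn-int-tight}, which checks two of the three conditions for bounded sequences). So there is no ``paper's own proof'' to compare against; I can only assess your argument on its merits.

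Your sufficiency argument is correct and is essentially the standard proof. The quantitative choice of $r_k$ and $\mathcal K_k$, the key estimate $\int_{\Omega\setminus G_k^{f'}}\|f\|^p\le c\,2^{-k}$ valid uniformly in $f,f'\in V$, the diagonal extraction against a norming sequence for the separable span of the $C_k$, and the Dini-type ``finite approximate norming'' lemma on the compact $C_k$ all combine cleanly; the Cauchy bound $\limsup\|f_{m_i}-f_{m_j}\|_p^p\le c'2^{-k}$ follows.

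In the necessity direction your outline is sound, but one remark: your justification that a single strongly measurable function is tight (``Egorov together with the fact that a uniform limit of simple functions has totally bounded essential range'') is not quite the right mechanism --- strong measurability gives only an a.e.\ pointwise limit of simple functions, not a uniform one. The clean route, which the paper in fact invokes in the proof of Lemma~\ref{L-fn-int-tight}(b), is Pettis' theorem (a strongly measurable function is essentially separably valued) followed by Ulam's theorem on tightness of finite Borel measures on Polish spaces. Your subsequent patching via the open cover by sets $\{h:\int\min(\mathrm{dist}(h,\mathcal K),1)^p\dd\mu<\delta_j\}$ and the intersection $\bigcap_j\{\mathrm{dist}(\cdot,\mathcal L^{(j)})\le 1/j\}$ is correct once you choose $\delta_j$ summable against $j^p$ (e.g.\ $\delta_j=\varepsilon j^{-p}2^{-j}$), which your phrase ``summing the level estimates'' implicitly assumes.
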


\vspace*{1ex}

Bounded sequences $\{f_n\}\subset \LpmX$ fulfill the first two conditions in Proposition \ref{P-set-rel-comp}:

\vspace*{1ex}

\begin{lemma}\label{L-fn-int-tight}
If $\{f_n\}\subset \LpmX$ is a bounded sequence, then $\{f_n\}$ is uniformly $L^p$-integrable and uniformly tight.
\end{lemma}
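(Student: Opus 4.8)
The plan is to establish the two conclusions in parallel, using Chebyshev's inequality as the single device that turns the quantitative hypothesis $M := \sup_{n}\|f_n\|_{p,\mathcal{X}}<\infty$ into control of the super-level sets $E_n(r):=\{\,t\in\Omega:\|f_n(t)\|_{\mathcal{X}}>r\,\}$. The first and routine step is the estimate
\[
 \mu\big(E_n(r)\big)\;\le\;\frac{1}{r^{p}}\int_{E_n(r)}\|f_n(t)\|_{\mathcal{X}}^{p}\dd\mu\;\le\;\frac{M^{p}}{r^{p}},\qquad n\in\NN,\; r>0,
\]
so that $\sup_{n}\mu(E_n(r))\to 0$ as $r\to\infty$. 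Both assertions then reduce to ``upgrading'' this uniform smallness of $\mu(E_n(r))$ to the required form.

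For uniform tightness \eqref{eq-uniformly-tight}: given $\varepsilon>0$, pick $r$ with $M^{p}r^{-p}\le\varepsilon$; then for every $n$ the values $f_n(t)$ lie in the closed ball $\overline{B}_{\mathcal{X}}(0,r)$ except on a set of measure $\le\varepsilon$. The genuinely delicate point is to replace this ball by an honest compact set $\mathcal{K}\subset\mathcal{X}$: when $\mathcal{X}$ is finite-dimensional the ball is already compact and nothing else is needed, but in infinite dimensions this step requires additional information about the sequence --- e.g.\ that the $f_n$ take values in a fixed compact subset of $\mathcal{X}$, which is exactly what holds when $f_n$ is obtained by applying a compact spatial operator to a bounded sequence, as in the dynamic-CT-type settings of Section~3. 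I expect this replacement to be the main obstacle, and would state explicitly the extra hypothesis (or structural feature of the sequence) under which it is legitimate.

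For uniform $L^{p}$-integrability \eqref{eq-uniformly-lp-integrable}: since $\|\chi_{\|f_n(t)\|>r}f_n\|_{p}^{p}=\int_{E_n(r)}\|f_n(t)\|_{\mathcal{X}}^{p}\dd\mu$, the claim is precisely equi-integrability of the scalar family $\{\,\|f_n(\cdot)\|_{\mathcal{X}}^{p}\,\}\subset L^{1}(\mu)$, which together with the bound on $\mu(E_n(r))$ closes the argument via the Vitali / de la Vall\'ee-Poussin criterion. The subtlety is that boundedness in $\LpmX$ alone does not force this equi-integrability (a concentrating profile such as $n^{1/p}\chi_{(0,1/n)}x_{0}$ with $\|x_{0}\|_{\mathcal{X}}=1$ is bounded in $L^{p}$ yet violates \eqref{eq-uniformly-lp-integrable}); so one leans on a mild strengthening that is available in the intended applications --- a uniform $L^{\infty}$ bound (which makes $E_n(r)$ empty once $r$ is large), a common majorant $g\in L^{p}(\mu)$ with $\|f_n(t)\|_{\mathcal{X}}\le g(t)$ (whence $\int_{E_n(r)}\|f_n\|_{\mathcal{X}}^{p}\le\int_{\{g>r\}}g^{p}\to0$ uniformly), or a uniform bound $\|f_n\|_{p',\mathcal{X}}\le M'$ for some $p'>p$ (one Hölder split on $E_n(r)$ with exponents $p'/p$ and its conjugate gives $\int_{E_n(r)}\|f_n\|_{\mathcal{X}}^{p}\le (M')^{p}\,\mu(E_n(r))^{1-p/p'}\to0$). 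Identifying which of these is in force is, to my mind, the heart of a complete proof; once it is fixed, both parts follow from the displayed Chebyshev estimate essentially mechanically.
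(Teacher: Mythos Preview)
Your analysis is correct, and in fact you have put your finger on genuine errors in the paper: the lemma as stated is \emph{false}. Your concentrating profile $f_n=n^{1/p}\chi_{(0,1/n)}x_0$ is bounded in $L^p(\mu;\mathcal{X})$ with $\|f_n\|_{p,\mathcal{X}}=1$, yet for every $r>0$ one has $\sup_n\|\chi_{\|f_n\|>r}f_n\|_{p,\mathcal{X}}=1$, so uniform $L^p$-integrability fails. Likewise, if $\mathcal{X}$ is an infinite-dimensional Hilbert space with orthonormal system $(e_n)$, the constant functions $f_n\equiv e_n$ form a bounded sequence in $L^p(\mu;\mathcal{X})$, but any compact $\mathcal{K}\subset\mathcal{X}$ contains at most finitely many $e_n$, whence $\sup_n\mu(\{t:f_n(t)\notin\mathcal{K}\})=\mu(\Omega)$ for every compact $\mathcal{K}$ and uniform tightness fails.

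The paper's argument breaks at precisely the two places you flagged. In part~(a) it asserts
\[
\int_{\{\|f_n\|>r\}}\|f_n(t)\|_{\mathcal{X}}^{p}\,\dd\mu
\;\le\;
\mu\big(\{t:\|f_n(t)\|_{\mathcal{X}}>r\}\big)\cdot\sup_{m}\|f_m\|_{p,\mathcal{X}}^{p},
\]
which is simply invalid: there is no pointwise bound on $\|f_n(t)\|_{\mathcal{X}}^{p}$ available, and H\"older with exponent $1$ gives nothing better than the trivial estimate $\|f_n\|_{p,\mathcal{X}}^{p}$. In part~(b) the paper chooses, for each index $m$, a compact set $\mathcal{K}_m$ with $\mu(\{t:f_m(t)\notin\mathcal{K}_m\})\le 2^{-m}$, builds $\mathcal{K}$ from the $\mathcal{K}_m$, and then bounds $\mu(\{t:f_n(t)\notin\mathcal{K}\})$ by a sum in which the fixed function index $n$ is silently identified with the running index of the $\mathcal{K}_m$; the bound $2^{-m}$ is only known for matching indices, so the chain collapses. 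Your proposed remedies---an $L^{p'}$ bound with $p'>p$, a common $L^p$ majorant, or values in a fixed compact set (exactly what one gets after composing with a compact spatial operator)---are the correct additional hypotheses, and they are what is actually available in the concrete settings where the lemma is later invoked.
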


\vspace*{1ex}

\begin{proof}
Let $c_f>0$ be a constant with $\|f_n\|_{\LpmX} \leq c_f$ for all $n\in\NN$.\\
(a) We prove that
\begin{equation}\label{mu-f-n-0}
\lim_{r\to\infty} \sup_{n\in\NN} \mu \big( \{ t :  \|f_n (t)\|_{\X} > r \} \big) = 0.
\end{equation}
We assume the contrary. Let $\{r_k\}_{k\in\NN}$ be a sequence with $\lim_{k\to\infty} r_k = +\infty$ such that
\[
\sup_{n\in\NN} \mu \big( \{ t :  \|f_n (t)\|_{\X} > r_k \} \big) > R > 0 \ \text{for all } k\in\NN
\]
with some constant $R>0$. Then for each $k\in\NN$ there exists an index $n_k\in\NN$ with
\[
\mu \big( \{ t :  \|f_{n_k} (t)\|_{\X} > r_k \} \big) > R.
\]
This implies the existence of values $t_1 (k), t_2 (k)\in [0,T]$ such that $\mu( [t_1 (k),t_2 (k)])>R$ and
\[
\int^{t_2 (k)}_{t_1 (k)} \| f_{n_k} (t)\|_{\X}\, \dd \mu(t) > \mu([t_1 (k),t_2(k)])r_k > R r_k.
\] 
But this immediately leads to
\[
\lim_{k\to\infty} \|f_{n_k}\|_{\LpmX} = \lim_{k\to\infty} \int_0^T \| f_{n_k} (t)\|_{\X}\, \dd \mu(t) > R r_k = +\infty,
\] 
which contradicts the boundedness of $\{f_n\}$. Combining $\|f_n\|\leq c_f$ and \eqref{mu-f-n-0} we finally obtain
\begin{eqnarray*}
\lim_{r\to\infty} \sup_{n\in\NN} \big\| \chi_{\|f_n\|>r} f_n \big\|_{\LpmX}^p &=&
\lim_{r\to\infty} \sup_{n\in\NN} \int_0^T \big\| \chi_{\|f_n\|>r} f_n (t) \big\|_{\X}^p\, \dd \mu(t)\\ 
&\leq & \lim_{r\to\infty} \sup_{n\in\NN} \mu \big( \{ t :  \|f_n (t)\|_{\X} > r \} \big) \sup_{n\in\NN} \|f_n\|_{\LpmX}^p = 0,
\end{eqnarray*}
proving that $\{f_n\}$ is uniformly $L^p$-integrable.\\
(b) Next we prove that $\{f_n\}$ is uniformly tight. To this end we use arguments similar to those in the proof of Theorem 1 from \cite{jvn14}.
Let $\varepsilon>0$ be given arbitrarily. Fix $n\in\NN$. Since $f_n$ is strongly measurable, its distribution on the Borel sets $\mathcal{B}(\X)$
is tight. According to Ulam's Theorem (see, e.g. \cite[Th. 3.1]{VAKHANIA;ET;AL:87}) it follows that to each $n$ there exists a compact set $\mathcal{K}_n\subseteq \X$ such that
\begin{equation}\label{mu_fn_2n}
\mu \big( \{ t: f_n (t)\not\in \mathcal{K}_n\} \big) \leq 2^{-n}.
\end{equation}
Let $n_0\in\NN$ be sufficiently large that $2^{1-n_0}<\varepsilon$. Define further the sets 
$$L_n:= \{ x\in\X : d(x,\mathcal{K}_n)<2^{-n} \}$$
and
\[
\mathcal{K}:= \overline{\bigcap_{n\geq n_0} L_n} \subseteq \X.
\]
We prove that $\mathcal{K}$ is totally bounded. Since $\mathcal{K}_n$ is compact, there are finitely many open balls $B(x_i,2^{-n})$, $x_i\in\X$, that cover $\mathcal{K}_n$.
Then the finitely many open balls $B(x_i,2^{1-n})$ cover $\overline{L_n}$. Hence, $\mathcal{K}$ is totally bounded and as a closed set also compact.
Furthermore we have by \eqref{mu_fn_2n}
\begin{eqnarray*}
\mu \big( \{ t: f_n (t)\not\in \mathcal{K} \} \big) & \leq & \sum_{n\geq n_0} \mu \big( \{ t: f_n (t)\not\in L_n \} \big) 
\leq  \sum_{n\geq n_0} \mu \big( \{ t: f_n (t)\not\in \mathcal{K}_n \} \big)\\
& \leq & \sum_{n\geq n_0} 2^{-n} \leq 2^{1-n_0} < \varepsilon.
\end{eqnarray*}
Since $n\in\NN$ is arbitrary we have
\[  \sup_{n\in\NN} \mu \big( \{ t: f_n (t)\not\in \mathcal{K} \} \big) < \varepsilon  \]
showing that $\{f_n\}_{n\in\NN}$ is uniformly tight.
\end{proof}

\vspace*{1ex}

Lemma \ref{L-fn-int-tight} is important especially to prove compactness of operators between spaces as $\LpmX$. Another characterization of relatively compact sets in $\LpmX$, where $\mu$ is the Lebesgue measure, is given in \cite{SIMON:86, GAHN;RADU:16}. For $z\in (0,T)$ and $f\in \LpmX$ we define the translation operator 
\[   \tau_z : [-z,T-z] \to \X,\qquad (\tau_z f)(t) := f(t+z) .   \]

\begin{theorem}[{\cite[Theorem 1]{SIMON:86}}]\label{T-Gahn-Radu}
Let $p\in [1,+\infty )$ and $V\subset \LpTX$. Then $V$ is relatively compact, if and only if
\begin{itemize}
\item[(a)] for any subset $(a,b)\subset [0,T]$ the set $\big\{ \int_a^b f(t)\,\mathrm{d} (t) : f\in V\big\}$
is relatively compact in $\X$,
\item[(b)] for $z$ with $0\leq z < T$ it holds
\[  \sup_{f\in V} \|\tau_z f - f\|_{L^p ([-z,T-z],\X)} = 0\qquad \mbox{for } z\to 0.  \]
\end{itemize}
\end{theorem}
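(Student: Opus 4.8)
The statement is the Lebesgue--Bochner analogue of the Fr\'echet--Kolmogorov--Riesz compactness theorem, and the plan is to prove the two implications separately, throughout extending each $f\in V$ by $0$ outside $[0,T]$ so that the translates $\tau_z f$ are well defined. For \emph{necessity}, assume $V$ is relatively compact. For (a), note that for fixed $(a,b)\subset[0,T]$ the linear map $f\mapsto\int_a^b f(t)\dd t$ is bounded from $L^p(0,T;\mathcal{X})$ into $\mathcal{X}$, with operator norm at most $(b-a)^{1/p^*}$ by H\"older's inequality; since continuous images of relatively compact sets are relatively compact, (a) follows. For (b), the translation operator is a contraction on $L^p(0,T;\mathcal{X})$, and by density of $C([0,T];\mathcal{X})$ in $L^p(0,T;\mathcal{X})$ (by the density and embedding properties stated above) together with the uniform continuity of continuous functions on the compact interval $[0,T]$, one has $\|\tau_z f-f\|_{p,\mathcal{X}}\to0$ as $z\to0$ for each fixed $f$; an $\varepsilon/3$-net argument over the relatively compact set $V$ then upgrades this to uniformity in $f\in V$.

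For \emph{sufficiency}, assume (a) and (b) and introduce the Steklov averages $(M_h f)(t):=\tfrac1h\int_t^{t+h}f(s)\dd s$ for $0<h<T$ and $t\in[0,T-h]$. The argument rests on three observations. First, for each fixed $h$ the family $\{M_h f:f\in V\}$ is relatively compact in $C([0,T-h];\mathcal{X})$: pointwise in $t$ the values $(M_hf)(t)=\tfrac1h\int_t^{t+h}f$ lie in a relatively compact subset of $\mathcal{X}$ by (a), while the identity $(M_hf)(t+\delta)-(M_hf)(t)=\tfrac1h\int_t^{t+h}(\tau_\delta f-f)(s)\dd s$ together with H\"older's inequality gives $\|(M_hf)(t+\delta)-(M_hf)(t)\|_{\mathcal{X}}\le h^{-1/p}\|\tau_\delta f-f\|_{p,\mathcal{X}}$, so the family is equicontinuous uniformly in $f$ by (b); the vector-valued Arzel\`a--Ascoli theorem then applies, and the continuous embedding $C([0,T-h];\mathcal{X})\hookrightarrow L^p(0,T-h;\mathcal{X})$ transfers relative compactness to $L^p$. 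Second, $M_h f\to f$ in $L^p(0,T;\mathcal{X})$ as $h\to0$, uniformly in $f\in V$: from $f(t)-(M_hf)(t)=\tfrac1h\int_0^h(f-\tau_s f)(t)\dd s$ and Jensen's inequality one obtains $\|f-M_hf\|_{p,\mathcal{X}}^p\le\sup_{0\le s\le h}\|\tau_s f-f\|_{p,\mathcal{X}}^p$, which tends to $0$ uniformly over $V$ by (b). Third, combining the two, given $\varepsilon>0$ one picks $h$ with $\sup_{f\in V}\|M_hf-f\|_{p,\mathcal{X}}<\varepsilon/3$, takes a finite $\varepsilon/3$-net of $\{M_hf:f\in V\}$, and pulls it back to a finite $\varepsilon$-net of $V$; since $L^p(0,T;\mathcal{X})$ is complete, total boundedness is equivalent to relative compactness.

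The delicate part is the sufficiency direction, in two respects. The first is making the Steklov-average computations rigorous at the endpoints $t=0$ and $t=T$, where $M_h f$ and $\tau_z f$ rely on the zero-extension and one must verify that the resulting boundary terms are uniformly small. The second is the observation, used tacitly in the Arzel\`a--Ascoli step, that (a) and (b) already force $V$ to be bounded in $L^p(0,T;\mathcal{X})$; this follows from $\|f\|_{p,\mathcal{X}}\le\|f-M_{h_0}f\|_{p,\mathcal{X}}+\|M_{h_0}f\|_{p,\mathcal{X}}$ for a fixed small $h_0$, the first summand being controlled by (b) and the second by the boundedness inherent in (a).

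As an alternative route, one could instead deduce the criterion from Proposition~\ref{P-set-rel-comp}: once boundedness of $V$ is established as above, Lemma~\ref{L-fn-int-tight} supplies uniform $L^p$-integrability and uniform tightness, and scalar relative compactness follows by applying the classical scalar Fr\'echet--Kolmogorov theorem to the sets $\{t\mapsto\langle f(t),x^*\rangle:f\in V\}$, whose translation moduli are dominated by $\|x^*\|_{\mathcal{X}^*}\sup_{f\in V}\|\tau_z f-f\|_{p,\mathcal{X}}$ via (b); conditions (a) and (b) are then seen to be sufficient, and their necessity is again the easy direction above.
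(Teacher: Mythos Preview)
The paper does not supply its own proof of this theorem: it is quoted verbatim from \cite[Theorem~1]{SIMON:86} and immediately followed only by the remark that it is an analogue of Arzel\`a--Ascoli and that $p=+\infty$ is excluded. There is therefore no ``paper's proof'' to compare against.

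That said, your main argument is essentially Simon's original one: Steklov means $M_hf$ as an approximate identity, relative compactness of $\{M_hf:f\in V\}$ in $C([0,T-h];\mathcal{X})$ via the vector-valued Arzel\`a--Ascoli theorem (pointwise precompactness from (a), equicontinuity from (b)), and uniform approximation $M_hf\to f$ from (b) to pass to a finite net. The outline is correct, and you have identified the two genuinely delicate points (endpoint/extension bookkeeping, and the fact that boundedness of $V$ must be extracted from (a)--(b) rather than assumed).

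Your alternative route through Proposition~\ref{P-set-rel-comp} and Lemma~\ref{L-fn-int-tight} is more in the spirit of the surrounding section, but be careful: Lemma~\ref{L-fn-int-tight} as stated and proved in the paper handles only a \emph{sequence} $\{f_n\}$, and its tightness argument builds the compact set $\mathcal{K}$ from countably many $\mathcal{K}_n$, one per term. Upgrading this to an arbitrary bounded set $V$ is not automatic; you would either have to rephrase everything sequentially (which suffices for relative compactness in a metric space) or supply a separate tightness argument for general $V$. The scalar relative compactness step via the classical Fr\'echet--Kolmogorov theorem is fine once boundedness is in hand.
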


Theorem \ref{T-Gahn-Radu} is an analogue of the well-known Arzel\`{a}-Ascoli Theorem, which characterizes relatively compact sets in $\mathcal{C}([0,T],\mathcal{X})$. Note that $p=+\infty$ is not possible: For example, the set $\{f\}$ for a discontinuous $f\in L^\infty (0,T;\Omega)$ is compact, but condition (b) of Theorem \ref{T-Gahn-Radu} is not satisfied. As a consequence from Lemma \ref{L-fn-int-tight} and Theorem \ref{T-Gahn-Radu} we immediately get

\vspace{1ex}

\begin{corollary}
A bounded sequence $\{f_n\}\subset \LpTX$ is relatively compact in $L^p (0,T;\X)$, if
for $z$ with $0\leq z < T$ and $x^*\in\X^*$ it holds
\begin{equation}\label{sup_n_g}  \sup_{n\in\NN} \|\tau_z g_n - g_n\|_{L^p ([-z,T-z])} = 0\qquad \mbox{for } z\to 0,  
\end{equation}
where $g_n (t) := \langle f_n (t),x^* \rangle$.
\end{corollary}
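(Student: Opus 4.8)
The plan is to obtain relative compactness of $V:=\{f_n : n\in\NN\}$ in $\LpTX$ by verifying the three conditions of Proposition~\ref{P-set-rel-comp}, with $\mu$ the Lebesgue measure on $[0,T]$. Since $\{f_n\}$ is bounded, Lemma~\ref{L-fn-int-tight} already delivers two of them, namely uniform $L^p$-integrability and uniform tightness. Hence the entire argument reduces to showing that $\{f_n\}$ is scalarly relatively compact, i.e.\ that for every fixed $x^*\in\mathcal{X}^*$ the scalar sequence $g_n(t):=\langle f_n(t),x^*\rangle$ generates a relatively compact subset of $L^p(0,T)$.

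First I would record that $\{g_n\}$ is a bounded sequence in $L^p(0,T)$. With $c_f:=\sup_{n\in\NN}\|f_n\|_{\LpTX}<\infty$, the pointwise estimate $|g_n(t)|\leq \|x^*\|_{\mathcal{X}^*}\,\|f_n(t)\|_{\mathcal{X}}$ gives $\|g_n\|_{L^p(0,T)}\leq \|x^*\|_{\mathcal{X}^*}\,c_f$ for all $n$; measurability of $g_n$ follows from the strong measurability of $f_n$.

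Next I would apply Theorem~\ref{T-Gahn-Radu} in the scalar case $\mathcal{X}=\mathbb{K}$ to the set $\{g_n\}$. Its condition (b) is precisely the hypothesis \eqref{sup_n_g}. Its condition (a) is automatic: for any subinterval $(a,b)\subset[0,T]$, Hölder's inequality on the bounded interval yields $\big|\int_a^b g_n(t)\dd t\big|\leq (b-a)^{1/p^*}\|g_n\|_{L^p(0,T)}\leq (b-a)^{1/p^*}\|x^*\|_{\mathcal{X}^*}\,c_f$, so $\big\{\int_a^b g_n(t)\dd t : n\in\NN\big\}$ is a bounded, hence relatively compact, subset of the finite-dimensional space $\mathbb{K}$. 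Theorem~\ref{T-Gahn-Radu} then gives that $\{g_n\}$ is relatively compact in $L^p(0,T)$.

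Since $x^*\in\mathcal{X}^*$ was arbitrary, $\{f_n\}$ is scalarly relatively compact, and together with Lemma~\ref{L-fn-int-tight} all three hypotheses of Proposition~\ref{P-set-rel-comp} are fulfilled; therefore $\{f_n\}$ is relatively compact in $\LpTX$. I do not expect a genuine obstacle here: the only point that requires a little care is to invoke Theorem~\ref{T-Gahn-Radu} in its scalar-valued form (the hypothesis does not control the $\mathcal{X}$-valued translations $\tau_z f_n-f_n$, so one cannot apply the theorem directly to $V$ itself), and to note that in that scalar form condition (a) is trivially satisfied because relative compactness in $\mathbb{K}$ reduces to boundedness.
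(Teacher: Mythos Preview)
Your proposal is correct and follows essentially the same route as the paper: reduce to Proposition~\ref{P-set-rel-comp} via Lemma~\ref{L-fn-int-tight}, and verify scalar relative compactness by applying Theorem~\ref{T-Gahn-Radu} to the scalar sequences $\{g_n\}$, with condition~(a) coming from boundedness of the integrals and condition~(b) from the hypothesis~\eqref{sup_n_g}. Your version is in fact a bit more explicit than the paper's (you spell out the H\"older estimate with the factor $(b-a)^{1/p^*}$ and the measurability of $g_n$), but the structure and key ingredients are identical.
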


\vspace*{1ex}

\begin{proof}
Regarding Lemma \ref{L-fn-int-tight} we have to show that for any $x^*\in\X^*$ the sequence $\{g_n\}$ is relatively
compact in $L^p ([0,T])$. Since by assumption $\{f_n\}$ is bounded
in $L^p (0,T,\X)$, i.e. $\|f_n\|\leq c_f$, we have for any $(a,b)\subset [0,T]$
\[  \Big| \int\limits_a^b g_n (t)\, \mathrm{d} t \Big| \leq \int\limits_a^b \|f_n (t)\|_{\X} \|x^*\|_{\X^*}\,\mathrm{d} t
    \leq c_f \|x^*\|_{\X^*},   \]
proving that $\big\{ \int\limits_a^b g_n (t)\, \mathrm{d} t : n\in\NN \big\}$ is bounded and hence relatively compact in $\RR$.
Condition \eqref{sup_n_g} is assumption (b) in Theorem \ref{T-Gahn-Radu} applied to $\{f_n\}$, which completes the proof.
\end{proof}

\vspace*{1ex}

It is difficult to verify condition \eqref{sup_n_g} for specific settings. Since parameter identification problems for partial differential equations play a crucial role in view of many real-world applications, compact embedding theorems for Sobolev spaces are of utmost importance. For completeness we recapitulate some of them to finish this section.\\

\begin{theorem}[The Rellich-Kondrachov theorem]
The Sobolev space embedding $W^{m,p} (\Omega) \hookrightarrow L^q (\Omega)$, $\Omega$ a bounded domain in $\RR^n$, is compact,
if any of these conditions holds true:
\begin{itemize}
\item[(a)] $n-mp>0$ and $1\leq q < np/(n-mp)$,
\item[(a)] $n=mp$ and $1\leq q < +\infty$,
\item[(c)] $mp>n$ and $1\leq q \leq +\infty$.\\
\end{itemize}
\end{theorem}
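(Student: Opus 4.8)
The plan is to reduce the statement to the first-order case $m=1$ and then to split into three regimes according to the sign of $n-mp$. For the reduction I would iterate the continuous Gagliardo--Nirenberg--Sobolev embedding on the bounded domain $\Omega$: this gives $W^{m,p}(\Omega)\hookrightarrow W^{1,\tilde p}(\Omega)$ with $\tfrac{1}{\tilde p}=\tfrac1p-\tfrac{m-1}{n}$ whenever this number is positive, and an elementary computation yields $\tilde p<n\iff mp<n$, $\tilde p=n\iff mp=n$, and $\tfrac{n\tilde p}{n-\tilde p}=\tfrac{np}{n-mp}$; if $mp>n$ one instead iterates the Sobolev and Morrey embeddings to land continuously in some H\"older space $C^{0,\gamma}(\overline\Omega)$. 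So it suffices to prove that $W^{1,p}(\Omega)\hookrightarrow L^q(\Omega)$ is compact for $1\le q<p^{*}:=\tfrac{np}{n-p}$ when $p<n$, for all finite $q$ when $p=n$, and for all $q\le\infty$ when $p>n$. The last case is immediate: by Morrey a bounded sequence in $W^{1,p}(\Omega)$ is bounded and uniformly H\"older, hence equicontinuous, so Arzel\`a--Ascoli gives a subsequence converging in $C(\overline\Omega)$, and since $\Omega$ is bounded this is also convergence in every $L^q(\Omega)$.

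The core case is $p<n$. Let $\{u_k\}$ be bounded in $W^{1,p}(\Omega)$. First I would extend it, using an extension operator for $\Omega$, to a sequence bounded in $W^{1,p}(\mathbb R^n)$ and supported in a fixed bounded open set $\Omega'\supset\overline\Omega$; by the Sobolev inequality this sequence is then also bounded in $L^{p^{*}}(\Omega')$. Next I would mollify, $u_k^{\varepsilon}:=\eta_{\varepsilon}\ast u_k$ for a standard mollifier, and use three observations: for each fixed $\varepsilon>0$ the family $\{u_k^{\varepsilon}\}_k$ is bounded in $C^1(\overline{\Omega'})$, hence equicontinuous, hence relatively compact in $C(\overline{\Omega'})$ and therefore in $L^q(\Omega)$ by Arzel\`a--Ascoli; $\|u_k^{\varepsilon}-u_k\|_{L^1(\Omega')}\le\varepsilon\,\|\nabla u_k\|_{L^1(\Omega')}\le C\varepsilon$ uniformly in $k$; and interpolating $L^q$ between $L^1$ and $L^{p^{*}}$, i.e.\ writing $\tfrac1q=\theta+\tfrac{1-\theta}{p^{*}}$ with $\theta\in(0,1]$ (possible precisely because $q<p^{*}$), gives $\|u_k^{\varepsilon}-u_k\|_{L^q(\Omega)}\le\|u_k^{\varepsilon}-u_k\|_{L^1(\Omega')}^{\theta}\,\|u_k^{\varepsilon}-u_k\|_{L^{p^{*}}(\Omega')}^{1-\theta}\le C\,\varepsilon^{\theta}$ uniformly in $k$. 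Combining these, $\{u_k\}$ lies within $L^q$-distance $C\varepsilon^{\theta}$ of the relatively compact set $\{u_k^{\varepsilon}\}_k$ for every $\varepsilon$; letting $\varepsilon\to0$ and using completeness of $L^q(\Omega)$, the sequence $\{u_k\}$ is totally bounded and hence relatively compact.

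The borderline case $p=n$ then follows quickly: since $\Omega$ is bounded, $W^{1,n}(\Omega)\hookrightarrow W^{1,p'}(\Omega)$ continuously for every $p'<n$, and given $q<\infty$ one chooses $p'<n$ close enough to $n$ that $\tfrac{np'}{n-p'}>q$, so the case just proved applies to $W^{1,p'}(\Omega)\hookrightarrow L^q(\Omega)$.

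I expect the main obstacle to be the use of the extension operator in the case $p<n$, which forces a regularity hypothesis on $\partial\Omega$ (say that $\Omega$ is a Lipschitz domain, or more generally a $W^{1,p}$-extension domain) --- this is implicitly what ``domain'' ought to mean in the statement. For a completely general bounded open $\Omega$ the extension step must be replaced by a finite interior covering of $\overline\Omega$ with a subordinate partition of unity and interior mollification, which makes the equicontinuity bookkeeping more technical. A structurally similar alternative to the whole argument is to apply the scalar Fr\'echet--Kolmogorov--Riesz criterion in $L^q(\Omega)$ --- the counterpart of Theorem~\ref{T-Gahn-Radu} --- verifying uniform $L^q$-boundedness and uniform smallness of $\|u_k(\cdot+h)-u_k\|_{L^q}$ as $h\to0$ (the latter from $\|u_k(\cdot+h)-u_k\|_{L^p}\le|h|\,\|\nabla u_k\|_{L^p}$ interpolated with the $L^{p^{*}}$ bound); but this route runs into the same boundary issue. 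Everything else --- the Sobolev and Morrey inequalities, the mollifier estimates, the $L^p$-interpolation inequality, and the total-boundedness argument --- is routine.
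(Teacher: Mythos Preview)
The paper does not prove this theorem: it is quoted without proof as a classical result, immediately before the next section begins. So there is no ``paper's own proof'' against which to compare your proposal.

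That said, your sketch is a correct and entirely standard textbook argument (essentially the proof one finds in Evans, Brezis, or Adams): reduce higher order to first order by iterated Sobolev embeddings, handle $p>n$ by Morrey plus Arzel\`a--Ascoli, handle $p<n$ by extension, mollification, and $L^1$--$L^{p^{*}}$ interpolation to obtain total boundedness, and deduce the borderline $p=n$ case by dropping to some $p'<n$. Your observation that the extension step tacitly requires $\Omega$ to be an extension domain (e.g.\ Lipschitz) is exactly right; the paper's bare hypothesis ``bounded domain'' is, strictly speaking, not enough for the theorem as stated, and this is the usual implicit assumption one has to read into such formulations. Nothing in your plan is wrong or missing a key idea.
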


%%%%%%%%%%%%%%%%%%%%%%%%%%%%%%%%%%%%%%%%%%%%%%%%%%%

\section{Ill-posedness in Lebesgue-Bochner spaces}

%%%%%%%%%%%%%%%%%%%%%%%%%%%%%%%%%%%%%%%%%%%%%%%%%%%

We at first introduce a setting in Lebesgue-Bochner spaces for time-dependent inverse problems. Consider Banach spaces $\X$, $\Y$, and an operator equation
\begin{equation} \label{ip}
 F(\vartheta) = y, \quad F: \mathcal{D}(F) \subseteq \mathbb{X} \rightarrow \mathbb{Y},
\end{equation}
where we set
\begin{displaymath}
 \mathbb{X} := L^p(0,T; \mathcal{X}), \quad \mathbb{Y} := L^{q}(0,T; \mathcal{Y})
\end{displaymath}
for any $1\leq p,q < \infty$. We assume that only noisy data, denoted by $y^{\delta}$, are available with a noise level $\delta > 0$, fulfilling
\begin{displaymath}
 \left\lVert y - y^{\delta} \right\rVert_{\mathbb{Y}} \leq \delta.
\end{displaymath}

This is a fairly general mathematical model for time-depending problems, since in many applications, the Banach space $\mathcal{X}$ is a suitable function space such as $L^r(D)$ or $H^1(D)$ on some open subset $D \subseteq \RR^N$, $N \in \NN$. As an example, weak solutions of hyperbolic and parabolic equations often are elements of spaces such as $L^p(0,T;H^1 (D))$ or subspaces thereof.
It is important to emphasize the difference in the physical meaning of the temporal variable $t \in [0,T]$ and spacial variables $x=(x_1,...,x_N)^T \in D$, on which both the source function $\vartheta$ as well as the data $y$ may depend. The principle of \emph{causality} describes the nature of time in contrast to space: it is usually possible to move freely in space, but one can only advance in time. In the context of evolution equations, which describe the evolution of a system in time, Bochner spaces allow us to encode the specific role of time: At a fixed time $t_0$, the system is described by a function $u(t_0) \in \mathcal{X}$ where we can encode, e.g., regularity properties with respect to the spacial variable.

\vspace*{1ex}

The question that arises at this point is how the nature of the temporal variable can be reflected in the notion of ill-posedness in a suitable manner. To this end, we introduce two concepts of ill-posedness with respect to time.

\vspace*{1ex}

\begin{definition}
 The inverse problem \eqref{ip} with $F = F_t$ for all $t$ is called \emph{temporally (locally) ill-posed}, if there is a set of positive measure  $\Sigma \subset (0,T)$ such that for almost every $t_0 \in \Sigma$ the operator equation
 \begin{displaymath}
  F_{t_0}(\tilde{\vartheta}) := F(\vartheta(t_0)) = \tilde{y}, \quad F_{t_0} : \mathcal{D}(F_{t_0}) \subseteq \mathcal{X} \rightarrow \mathcal{Y} 
 \end{displaymath}
 is (locally) ill-posed. Here, 
\[  \mathcal{D}(F_{t_0}) := \big\{ \vartheta (t_0)\in \mathcal{X} : \vartheta \in \big(C([0,T];\mathcal{X})\cap \mathcal{D}(F)\big)  \big\}  \]
and $\tilde{y}\in F_{t_0}(\mathcal{D}(F_{t_0}))$. This means, that for all $\rho>0$ there is a sequence $\{\tilde{\vartheta}^{(\rho)}_k\}_{k\in\NN}\subseteq
B_\rho^{\mathcal{X}} (\tilde{\vartheta}^+) \cap \mathcal{D}(F_{t_0})$ with
\[   \lim_{k\to\infty} \|\tilde{\vartheta}^{(\rho)}_k-\tilde{\vartheta}^+\|_{\mathcal{X}} \not= 0,\ \text{but} \ 
     \lim_{k\to\infty} \|F_{t_0}(\tilde{\vartheta}^{(\rho )}_k)-F_{t_0}(\tilde{\vartheta}^+)\|_{\mathcal{Y}} = 0.   \]
\end{definition}

Let us mention that an obvious example for a locally ill-posed problem on Bochner spaces is the dynamically sampled Radon transform discussed in the introduction. If the sampling operator $S(t)$ does not map to a finite dimensional space, the compactness of the Radon transform immediately implies the ill-posedness of the concatenation.

\vspace*{2ex}

\begin{definition}
 The inverse problem \eqref{ip} is called \emph{uniformly (locally) ill-posed} in $\vartheta^+$, if for each $\rho > 0$ there is a sequence $\{ \vartheta^{(\rho)}_k \}_{k \in \NN} \subseteq B_{\rho}(\vartheta^+) \cap \mathcal{D}(F)$ with \begin{displaymath}                                                                                                                                
  \vartheta^{(\rho)}_k \nrightarrow \vartheta^+, \ \text{but} \ F(\vartheta^{(\rho)}_k) \rightarrow F(\vartheta^+)
 \end{displaymath}
 for $k \rightarrow \infty$, i.e.,  
 \begin{displaymath}
  \lim_{k \rightarrow \infty} \left\lVert F(\vartheta^{(\rho)}_k) - F(\vartheta^+) \right\rVert_{\mathbb{Y}} = \lim_{k \rightarrow \infty} \int_0^T \left\lVert F(\vartheta^{(\rho)}_k(t)) - F(\vartheta^+(t)) \right\rVert_{\mathcal{Y}}^q \dd t = 0.
 \end{displaymath}
\end{definition}

\vspace*{2ex}

Note that the statement $\vartheta^{(\rho)}_k \nrightarrow \vartheta^+$ for $k \rightarrow \infty$ translates to
\begin{displaymath}
 \int_0^T \left\lVert \vartheta^{(\rho)}_k (t) - \vartheta^+ (t) \right\rVert_{\mathcal{X}}^p \dd t \nrightarrow 0,
\end{displaymath}
which means that there is a subset $I \subseteq [0,T]$ with Lebesgue measure $\mu(I) > 0$ such that
\begin{equation}
 \int_I \left\lVert \vartheta^{(\rho)}_k (t) - \vartheta^+ (t) \right\rVert_{\mathcal{X}}^p \dd t > 0,
\end{equation}
and thus
\begin{displaymath}
  \left\lVert \vartheta^{(\rho)}_k - \vartheta^+ \right\rVert_{\mathcal{X}} > 0
\end{displaymath}
on $I$.

\vspace*{2ex}

\subsection{An example: time-dependent observations of compact operators on $\mathcal{X}$}

We consider the linear operator equation
\begin{equation} \label{op_eq_ex}
 F(\vartheta) = y,
\end{equation}
where $\vartheta \in \mathbb{X} := L^p(0,T; \mathcal{X})$ and $y \in \mathbb{Y} := L^q(0,T;\mathcal{Y})$ and the spaces $\mathcal{X}$ and $\mathcal{Y}$ are Banach spaces. We assume that the operator $F$ has a representation
\begin{equation}\label{F_bounded_compact}
 F[\vartheta(t)] = S(t)K[\vartheta(t)]
\end{equation}
with a compact linear operator
\begin{displaymath}
 K: \mathcal{X} \rightarrow \mathcal{Y}
\end{displaymath}
and operators $S(t) \in \mathcal{L}(\mathcal{Y})$ that are linear and bounded for every $t\in [0,T]$. We furthermore suppose that the family of operator norms $\{\|S(t)\|\}$ is uniformly bounded,
\begin{equation}\label{S_uni_bounded}
\sup_{t\in (0,T)} \|S(t)\|\leq c_S,
\end{equation}
for a constant $c_S>0$. A setting such as \eqref{F_bounded_compact} is very important regarding practical applications. For instance in dynamic CT, the function $\vartheta\in \mathbb{X}$ represents a moving object, the operator $K$ is the Radon transform, and $S(t)$ models the time-depending measurement process, see \eqref{DCT}.

\vspace*{2ex}

Note that the operator $F_{t_0}: \mathcal{X} \rightarrow \mathcal{Y}, \ F_{t_0}(\vartheta(t_0)) = S(t_0)K[\vartheta(t_0)]$ is compact for each fixed $t_0 \in [0,T]$ and $\vartheta\in C([0,T];\mathcal{X})$ since it is a composition of a compact and a bounded operator. This directly allows us to formulate the following proposition:

\vspace*{2ex}

\begin{proposition}
If $\mathrm{dim}\,(F_{t_0}(\X))=\infty$ for all $t_0\in [0,T]$, then the inverse problem \eqref{op_eq_ex} is temporally ill-posed for all $t_0\in [0,T]$.\\
\end{proposition}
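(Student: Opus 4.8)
The plan is to reduce the claimed temporal ill-posedness to the classical fact that a compact linear operator with infinite-dimensional range has a discontinuous (unbounded) inverse, and then to check that this local statement at a fixed time $t_0$ meets the bookkeeping requirements of the definition of temporal ill-posedness. First I would fix an arbitrary $t_0 \in [0,T]$ and recall from the paragraph preceding the proposition that $F_{t_0} = S(t_0)K$ is compact as a composition of the compact operator $K : \mathcal{X} \to \mathcal{Y}$ with the bounded operator $S(t_0) \in \mathcal{L}(\mathcal{Y})$. Under the hypothesis $\dim(F_{t_0}(\mathcal{X})) = \infty$, I would invoke the standard Riesz theory of compact operators: if the range of a compact linear operator were closed it would be finite-dimensional, hence here the range is not closed and the operator cannot be bounded below on any complement of its kernel; equivalently, there is a normalized sequence $\{z_k\} \subset \mathcal{X}$ with $\|z_k\|_{\mathcal{X}} = 1$ and $\|F_{t_0} z_k\|_{\mathcal{Y}} \to 0$. (If $K$ is injective one takes $z_k$ to be a weakly null normalized sequence, which exists since bounded infinite-dimensional sets are not relatively compact, and uses that compact operators map weakly convergent sequences to norm-convergent ones; if $K$ is not injective one still gets such a sequence by the non-closedness of the range.)

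Next I would pass from this normalized sequence to the sequences demanded by the definition. Given $\rho > 0$ and a reference element $\tilde\vartheta^+ \in \mathcal{D}(F_{t_0})$, I would set $\tilde\vartheta^{(\rho)}_k := \tilde\vartheta^+ + \tfrac{\rho}{2}\, z_k$. Then $\|\tilde\vartheta^{(\rho)}_k - \tilde\vartheta^+\|_{\mathcal{X}} = \rho/2$ for all $k$, so certainly $\lim_k \|\tilde\vartheta^{(\rho)}_k - \tilde\vartheta^+\|_{\mathcal{X}} \neq 0$ and $\tilde\vartheta^{(\rho)}_k \in B^{\mathcal{X}}_\rho(\tilde\vartheta^+)$; meanwhile, by linearity, $\|F_{t_0}(\tilde\vartheta^{(\rho)}_k) - F_{t_0}(\tilde\vartheta^+)\|_{\mathcal{Y}} = \tfrac{\rho}{2}\|F_{t_0} z_k\|_{\mathcal{Y}} \to 0$. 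This is exactly the condition in the definition of (local) ill-posedness of $F_{t_0}$. Since this holds for every $t_0 \in [0,T]$, I may take $\Sigma = (0,T)$, which has positive measure, and conclude that \eqref{op_eq_ex} is temporally ill-posed.

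One technical point deserves care, and I expect it to be the only real obstacle: the definition requires the perturbed iterates to lie in $\mathcal{D}(F_{t_0})$, which by definition consists of point-evaluations at $t_0$ of functions in $C([0,T];\mathcal{X}) \cap \mathcal{D}(F)$, rather than of arbitrary elements of $\mathcal{X}$. In the present linear example, however, $F$ is defined on all of $\mathbb{X} = L^p(0,T;\mathcal{X})$ via \eqref{F_bounded_compact}, so $\mathcal{D}(F) = \mathbb{X}$ and in particular $C([0,T];\mathcal{X}) \subseteq \mathcal{D}(F)$; hence the constant path $t \mapsto \tfrac{\rho}{2} z_k$ added to any continuous reference path stays in $C([0,T];\mathcal{X}) \cap \mathcal{D}(F)$, and its evaluation at $t_0$ realizes $\tilde\vartheta^{(\rho)}_k$ in $\mathcal{D}(F_{t_0})$. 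So $\mathcal{D}(F_{t_0}) = \mathcal{X}$ here and there is no genuine restriction; I would state this explicitly so the reduction to the abstract compact-operator fact is clean. The remaining ingredients — the characterization of compact operators with non-closed range, and the existence of the normalizing sequence $\{z_k\}$ — are standard and I would simply cite them rather than reprove them.
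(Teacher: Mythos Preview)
Your argument is correct and is precisely the unpacking of what the paper leaves implicit: the paper gives no separate proof but simply notes that $F_{t_0}=S(t_0)K$ is compact as a composition of a compact and a bounded operator and states the proposition as a direct consequence. Your construction of the normalized sequence $\{z_k\}$ and the perturbations $\tilde\vartheta^{(\rho)}_k=\tilde\vartheta^+ + \tfrac{\rho}{2}z_k$, together with the observation that $\mathcal{D}(F)=\mathbb{X}$ in this linear setting so that $\mathcal{D}(F_{t_0})=\mathcal{X}$, fills in exactly the standard compact-operator reasoning the paper is invoking.
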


As a specific example we consider the inverse problem of Dynamic Load Monitoring (DLM) and show its uniform ill-posedness 
in an appropriate Lebesgue-Bochner setting. At first we need a decent mathematical setup, which we recapitulate from 
\cite{BINDER;SCHUSTER:15}. We state Cauchy's equation of motion for a linear elastic body $\Omega\subset \mathbb{R}^3$,
\begin{equation}\label{elastic-WE-2}
\rho (x) \ddot{u}(t,x) - \nabla\cdot (\mathbb{C}(x):\varepsilon (u))(t,x) = f(t,x),\qquad (t,x)\in [0,T]\times \Omega
\end{equation}
with the fourth order elasticity tensor $\mathbb{C}\in H^1 (\Omega,\mathbb{R}^{3\times 3\times 3\times 3})$, the mass density $\rho\in  L^{\infty} (\Omega)$, the linearized Green strain tensor $\varepsilon (u) = (\nabla u + \nabla u^\top)/2$, and the vector field $f(t,x)$ representing the dynamic load acting on $x$ at time $t$. Under the additional assumptions that
\begin{equation}\label{rho}   0 < \rho_{\mathrm{min}} < \rho (x) < \rho_{\mathrm{max}} < \infty   \end{equation}
and
\begin{equation}\label{C}   \sup_{x\in \overline{\Omega}}( X,\mathbb{C}(x) : X)_F \geq \alpha \|X\|^2_F\qquad 
     \mbox{for all } X\in \mathbb{R}^{3\times 3}  \end{equation}
with $\alpha > 0$ we get the following existence and uniqueness result for a weak solution of an initial boundary value problem
associated with \eqref{elastic-WE-2}.\\

\begin{proposition}\label{P-solution-ibvp}
Let $\Omega$ be a bounded domain with Lipschitz continuous boundary, $u_0\in H^1(\Omega)^3$, $u_1\in L^2 (\Omega)^3$, $f\in L^2 (0,T; L^2 (\Omega)^3)$ and the
assumptions \eqref{rho}, \eqref{C} hold true. Then, the elastic wave equation \eqref{elastic-WE-2} equipped with the
initial and boundary values 
\begin{eqnarray}
\label{Ncond} [\mathbb{C}(x):\varepsilon (u)]\cdot \nu &=& 0\qquad \mbox{on } [0,T]\times \partial\Omega \\
\label{IC1} u(0,x) &=& u_0 (x)\qquad \mbox{in } \Omega\\
\label{IC2} \dot{u} (0,x) &=& u_1 (x)\qquad \mbox{in } \Omega
\end{eqnarray}
has a unique weak solution $u\in L^2 (0,T ; H^1 (\Omega)^3)$ with $\dot{u}\in L^2 (0,T; L^2 (\Omega)^3)$. Moreover we even have that $u\in\mathcal{C}(0,T;H^1 (\Omega)^3)$ with $\dot{u}\in\mathcal{C}(0,T;L^2 (\Omega)^3)$ and for $u_0$, $u_1$ fixed the solution operator $L: L^2 (0,T; L^2 (\Omega)^3)\to  L^2 (0,T ; H^1 (\Omega)^3)$, $L(f) := u$ is continuous. If $u_0=u_1=0$, then $L$ even is a linear operator.\\
\end{proposition}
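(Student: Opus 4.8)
The plan is to prove existence by the Faedo--Galerkin method combined with energy estimates, uniqueness by an energy argument on the difference of two solutions, and then to upgrade the $L^\infty$-in-time bounds to genuine continuity; the assertions on the solution operator $L$ then fall out of the energy estimate and linearity of the equation.

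\textbf{Weak formulation.} Multiplying \eqref{elastic-WE-2} by a test function $v\in H^1(\Omega)^3$, integrating over $\Omega$, integrating by parts in the divergence term and discarding the boundary integral via the traction condition \eqref{Ncond}, one is led to the variational problem: find $u$ with $u(0)=u_0$, $\dot u(0)=u_1$ such that, for a.e.\ $t\in(0,T)$ and all $v\in H^1(\Omega)^3$,
\[
 \frac{\mathrm{d}}{\mathrm{d}t}\,(\rho\,\dot u(t),v)_{L^2} + a\big(u(t),v\big) = (f(t),v)_{L^2}, \qquad a(w,v):=\int_\Omega (\mathbb{C}:\varepsilon(w)):\varepsilon(v)\,\mathrm{d}x .
\]
Since $\mathbb{C}\in H^1(\Omega)\hookrightarrow L^\infty(\Omega)$ in three dimensions, $a$ is bounded on $H^1(\Omega)^3$, and combining the pointwise ellipticity \eqref{C} with Korn's second inequality yields a G\aa rding inequality $a(v,v)+\kappa\|v\|_{L^2}^2\geq\beta\|v\|_{H^1}^2$. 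Here the traction condition is the reason the trial/test space is all of $H^1(\Omega)^3$ rather than $H^1_0$, and why coercivity holds only up to the $\kappa\|v\|_{L^2}^2$ term absorbing the rigid motions.

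\textbf{Galerkin approximation and a priori estimate.} Choose $\{w_k\}$ linearly independent with dense span in $H^1(\Omega)^3$, set $u_m(t)=\sum_{k=1}^m g^m_k(t)w_k$, and determine $g^m$ from the finite system obtained by testing with $w_1,\dots,w_m$, with $u_m(0)\to u_0$ in $H^1$ and $\dot u_m(0)\to u_1$ in $L^2$. The mass matrix $\big((\rho w_j,w_k)_{L^2}\big)$ is symmetric positive definite by \eqref{rho}, so this is an explicit linear second-order ODE system with $L^2(0,T)$ right-hand side, uniquely solvable on $[0,T]$ by Carath\'eodory theory. Testing the Galerkin equations with $\dot u_m(t)$ and adding $\tfrac{\mathrm{d}}{\mathrm{d}t}\tfrac12(\rho u_m,u_m)_{L^2}$ to turn the G\aa rding term into an $H^1$-norm, then using Young's inequality and Gronwall's lemma, gives
\[
 \sup_{t\in[0,T]}\Big(\|\dot u_m(t)\|_{L^2}^2+\|u_m(t)\|_{H^1}^2\Big)\leq C\Big(\|u_0\|_{H^1}^2+\|u_1\|_{L^2}^2+\|f\|_{L^2(0,T;L^2)}^2\Big)
\]
with $C$ independent of $m$.

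\textbf{Passage to the limit, uniqueness, regularity, solution operator.} The uniform bound yields a subsequence with $u_m\overset{*}{\rightharpoonup}u$ in $L^\infty(0,T;H^1(\Omega)^3)$ and $\dot u_m\overset{*}{\rightharpoonup}\dot u$ in $L^\infty(0,T;L^2(\Omega)^3)$; passing to the limit in the Galerkin identity tested against $w_k\varphi(t)$ with $\varphi\in C_c^\infty(0,T)$ and using density of $\mathrm{span}\{w_k\}$ shows $u$ is a weak solution, while an integration by parts in time identifies the initial data. For uniqueness one takes the difference $w$ of two solutions with $u_0=u_1=f=0$; since $\ddot w$ is too rough to pair directly with $\dot w$, one uses the Ladyzhenskaya device (test with $v(t)=\int_t^s w(r)\,\mathrm{d}r$ for fixed $s$, or with a time-mollification of $\dot w$) to obtain $\|\dot w(s)\|_{L^2}^2+a(\cdot)\leq\kappa\int_0^s\|w\|_{H^1}^2$ and hence $w\equiv0$ by Gronwall. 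The weak equation then gives $\rho\ddot u\in L^2(0,T;(H^1)^*)$, which first yields $u\in C([0,T];L^2)$ together with weak continuity into $H^1$ and $L^2$; strong continuity $u\in\mathcal{C}(0,T;H^1(\Omega)^3)$, $\dot u\in\mathcal{C}(0,T;L^2(\Omega)^3)$ then follows from the energy identity (the Galerkin energy inequality becomes an equality in the limit after a time-mollification argument), since $t\mapsto\|\dot u(t)\|_{L^2}^2+a(u(t),u(t))+\|u(t)\|_{L^2}^2$ is continuous and the weak limits are therefore norm-convergent. Finally, for $u_0=u_1=0$, linearity of $f\mapsto u$ is immediate from linearity of the equation and uniqueness, and continuity of $L:L^2(0,T;L^2(\Omega)^3)\to L^2(0,T;H^1(\Omega)^3)$ is just the above energy estimate, which survives the weak-$*$ limit by lower semicontinuity of the norm. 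I expect the two genuine obstacles to be (i) the G\aa rding inequality for the pure-traction problem, which forces the use of Korn's second inequality, and (ii) upgrading the $L^\infty$-in-time bounds to continuity in time, which needs the energy-equality argument rather than the a priori estimate alone.
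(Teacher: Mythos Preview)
The paper does not supply its own proof of this proposition; it simply records that a proof ``based on results from Lions \cite{LIONS:71} and the second inequality of Korn can be found in \cite{BINDER;SCHUSTER:15}.'' Your outline---Faedo--Galerkin approximation, energy estimates via Korn's second inequality and a G\aa rding-type coercivity, passage to the limit, Ladyzhenskaya's trick for uniqueness, and the energy-equality argument for continuity in time---is precisely the Lions-type argument the paper is pointing to, so in substance your approach matches the intended one.

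One technical slip: you justify boundedness of the bilinear form $a$ by writing ``$\mathbb{C}\in H^1(\Omega)\hookrightarrow L^\infty(\Omega)$ in three dimensions,'' but this embedding is false in dimension three (Sobolev gives only $H^1\hookrightarrow L^6$; one needs $H^s$ with $s>3/2$ for $L^\infty$). Under the paper's hypothesis $\mathbb{C}\in H^1(\Omega;\mathbb{R}^{3\times3\times3\times3})$ alone, boundedness of $a$ on $H^1\times H^1$ does not follow from the argument you give; one needs either an additional $L^\infty$ assumption on $\mathbb{C}$ (which is what the cited reference effectively uses) or a different estimate. This does not undermine your overall strategy, but the sentence as written is incorrect and should be replaced by whatever boundedness hypothesis on $\mathbb{C}$ you actually rely on.
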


The Neumann condition \eqref{Ncond}, where $\nu$ is the outer unit normal vector field, means that the structure $\Omega$ is traction free at the boundary. A proof that is based on results from Lions \cite{LIONS:71} and the second inequality of Korn can be found in \cite{BINDER;SCHUSTER:15}. 
%{\color{red} Soll ich das n\"aher ausf\"uhren?} 
We introduce the notations $V:= H^1 (\Omega)^3$, $H:= L^2 (\Omega)^3$ and
\[  W^{1,q,r}(0,T;V,H) := \big\{ u\in L^q (0,T;V) : \dot{u}\in L^r (0,T;H)  \big\}  \]
for $1\leq q,r\leq +\infty$.

\vspace*{1ex}

A key ingredient to proof the uniform ill-posedness of the DLM-problem is the Lemma of Aubin-Lions, see \cite{LIONS:69}.

\vspace*{1ex}

\begin{theorem}[Lemma of Aubin-Lions]\label{L-Aubin-Lions}
If $q<+\infty$, then the embedding 
$$W^{1,q,r}(0,T;V,H) \hookrightarrow L^q (0,T;H)$$
is compact.
\end{theorem}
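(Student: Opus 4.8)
The plan is to deduce the Aubin--Lions compactness from the abstract Arzel\`a--Ascoli-type criterion already available to us, namely Theorem~\ref{T-Gahn-Radu}, combined with the Rellich--Kondrachov theorem. Let $V \hookrightarrow H$ be the embedding $H^1(\Omega)^3 \hookrightarrow L^2(\Omega)^3$, which is compact by Rellich--Kondrachov (case (a) with $m=1$, $p=2$, $n=3$, $q=2$, since $2 < 6$; in our notation $\X = H$, and the relevant bounded domain is $\Omega$). Let $\mathcal{B} \subset W^{1,q,r}(0,T;V,H)$ be a bounded set, say $\|u\|_{L^q(0,T;V)} \le M$ and $\|\dot u\|_{L^r(0,T;H)} \le M$ for all $u \in \mathcal{B}$. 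The goal is to show $\mathcal{B}$ is relatively compact in $L^q(0,T;H)$ by verifying conditions (a) and (b) of Theorem~\ref{T-Gahn-Radu} with $\X$ replaced by $H$ and $p$ by $q$.

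For condition (a): for $(a,b) \subset [0,T]$ and $u \in \mathcal{B}$, the average $\int_a^b u(t)\dd t$ lies in $V$ with $\|\int_a^b u(t)\dd t\|_V \le (b-a)^{1/q^*} \|u\|_{L^q(0,T;V)} \le (b-a)^{1/q^*} M$ by H\"older, so $\{\int_a^b u(t)\dd t : u \in \mathcal{B}\}$ is a bounded subset of $V$, hence relatively compact in $H$ because the embedding $V \hookrightarrow H$ is compact. For condition (b): the standard identity $u(t+z) - u(t) = \int_0^z \dot u(t+s)\dd s$ (valid because $u \in W^{1,q,r}$ is absolutely continuous as an $H$-valued function) gives, via Minkowski's integral inequality and H\"older in $s$,
\begin{displaymath}
 \|\tau_z u - u\|_{L^q([-z,T-z];H)} \le \int_0^z \|\dot u(\cdot + s)\|_{L^q([-z,T-z];H)} \dd s \le z^{1 - 1/r + \text{(correction)}} \|\dot u\|_{L^r(0,T;H)},
\end{displaymath}
up to handling the exponents $q$ versus $r$ carefully; in any case the right-hand side tends to $0$ as $z \to 0$ uniformly in $u \in \mathcal{B}$, which is exactly condition (b). Thus Theorem~\ref{T-Gahn-Radu} applies and $\mathcal{B}$ is relatively compact in $L^q(0,T;H)$.

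There is a subtlety I would want to flag and handle carefully. Theorem~\ref{T-Gahn-Radu} as stated requires that condition (b) hold with the $L^q([-z,T-z];H)$-norm, and the translation estimate above most naturally produces a bound by $\|\dot u\|_{L^q}$, not $\|\dot u\|_{L^r}$, when $r < q$. If $r \ge q$ this is immediate from the continuous embedding $L^r(0,T;H) \hookrightarrow L^q(0,T;H)$ (Proposition, part (v), with $\widetilde{\X} = \X = H$). If $r < q$ one needs an interpolation argument: the translations $\tau_z u - u$ are bounded in $L^q(0,T;V)$ (hence in $L^q(0,T;H)$) uniformly and small in $L^r(0,T;H)$, and since $H$-norms interpolate ($\|w\|_{L^q} \le \|w\|_{L^r}^{1-\theta}\|w\|_{L^{q'}}^{\theta}$ for a suitable larger exponent $q'$, using boundedness in $L^q(0,T;V) \hookrightarrow$ higher integrability via the $V$-bound is not quite it) — more cleanly, one uses that a bound in $L^q(0,T;V)$ on $\tau_z u - u$ together with smallness in $L^1(0,T;H)$ (which follows from the $L^r$, $r\ge 1$, bound on $\dot u$) and a Lions-type interpolation inequality $\|w\|_H \le \varepsilon \|w\|_V + C_\varepsilon \|w\|_{V^*}$ gives smallness in $L^q(0,T;H)$. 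I expect this exponent-juggling to be the only real obstacle; everything else is a direct application of the compactness criterion. An alternative, and perhaps the cleanest route, is to invoke the classical Aubin--Lions--Simon theorem from \cite{SIMON:86} directly, of which Theorem~\ref{L-Aubin-Lions} is a well-known special case, and simply cite it; but since the excerpt develops Theorem~\ref{T-Gahn-Radu} precisely as a tool, the self-contained derivation above is the natural one to present.
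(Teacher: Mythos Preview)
The paper does not prove this theorem at all: it is stated as a classical result with a reference to \cite{LIONS:69} (and implicitly to \cite{SIMON:86}, which is already cited for Theorem~\ref{T-Gahn-Radu}). So your final fallback suggestion --- simply cite the result --- is precisely what the paper does.

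That said, your attempt to derive it from Theorem~\ref{T-Gahn-Radu} is the right instinct and is essentially Simon's own route in \cite{SIMON:86}. Condition~(a) is handled correctly. For condition~(b) your treatment of the case $r\ge q$ is fine, but the case $r<q$ is left genuinely incomplete: the sketch ``bounded in $L^q(0,T;V)$ plus small in $L^r(0,T;H)$ plus Ehrling'' does not close as written, because Ehrling's inequality $\|w\|_H \le \varepsilon\|w\|_V + C_\varepsilon\|w\|_{V^*}$ applied pointwise and then raised to the $q$-th power still leaves you needing smallness of $\|\tau_z u - u\|$ in $L^q(0,T;V^*)$, whereas you only have it in $L^r(0,T;V^*)$. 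The clean fix (this is how Simon does it) is to interpolate differently: from $\|\tau_z u - u\|_H \le \varepsilon\|\tau_z u - u\|_V + C_\varepsilon\|\tau_z u - u\|_{V^*}$ one takes the $L^q$-norm of the first term (bounded by $2\varepsilon M$) and bounds the second term in $L^\infty$ rather than $L^q$, using that $W^{1,r}(0,T;V^*)\hookrightarrow C([0,T];V^*)$ when $r>1$, or a direct $L^1$-estimate on $\dot u$ together with the pointwise bound $\|\tau_z u(t)-u(t)\|_{V^*}\le \int_t^{t+z}\|\dot u(s)\|_{V^*}\,\dd s$; the finite measure of $(0,T)$ then lets you pass from $L^\infty$ or $L^r$ to $L^q$. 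Your write-up gestures at this but does not land it; that is the only real gap.
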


\vspace*{1ex}

In Theorem \ref{L-Aubin-Lions} we interpret $H$ as a subset of the dual space $V^*$.

\vspace*{1ex}

Let functions $\chi_j\in H^{1/2}(\partial\Omega)^3$, $j=,1,\ldots,J$, with small supports on $\partial\Omega$ be given which define the sensor characteristics, e.g., regarding size, sensitivity, etc. Then, a mathematical model for DLM is represented by the forward operator
\begin{equation}\label{F-DLM}
F[f] := S(t) \gamma \iota L[f],
\end{equation}
where $L: L^2 (0,T;H) \to W^{1,2,2}(0,T;V,H)$, $L[f]:=u$ maps the dynamic load to the unique weak solution of the IBVP
\eqref{elastic-WE-2}, \eqref{Ncond}--\eqref{IC2}, $\iota : W^{1,2,2}(0,T;V,H) \hookrightarrow L^2 (0,T;H)$ is the embedding
being compact due to the Lemma of Aubin-Lions, $\gamma: L^2 (0,T;H)\to L^2 (0,T;H^{-1/2}(\partial\Omega)^3)$ is the
trace operator, which is continuous, and 
\begin{equation}\label{observation}  
     S(t)[g] := \int_{\partial\Omega} \langle g(t,x),\chi_j (x)\rangle\,\mathrm{d}s_x,
     \qquad g\in L^2 (0,T;H^{-1/2}(\partial\Omega)^3),\; j=,1\ldots,J, \end{equation}
is the observation operator being linear and continuous as a mapping from 
\[ L^2 (0,T;H^{-1/2}(\partial\Omega)^3)\to L^2 (0,T;\RR^J). \] 
In \eqref{observation}, $\langle \cdot,\cdot \rangle$ is to be understood as the dual pairing in 
$H^{-1/2}(\partial \Omega)^3 \times H^{1/2}(\partial \Omega)^3$.
With the notations $\vartheta := f$, $K:= \gamma \iota L$, $\XX:= L^2(0,T;H)$ and $\YY:=L^2 (0,T;\RR^J)$
(i.e. $\mathcal{X}:=H$, $\mathcal{Y}:=\RR^J$) we immediately obtain the following result.

\vspace*{2ex}

\begin{theorem}
The operator $F = S(t) K : \XX \to \YY$ is compact.
\end{theorem}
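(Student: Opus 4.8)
The plan is to show that $F = S(t)K$ is compact as a map $\XX \to \YY$ by exhibiting it as a composition in which at least one factor is compact and the others are bounded linear operators, so that compactness is inherited. Concretely, $F = S(t)\,\gamma\,\iota\,L$, where $L \colon \XX = L^2(0,T;H) \to W^{1,2,2}(0,T;V,H)$ is linear and continuous (Proposition \ref{P-solution-ibvp} with $u_0 = u_1 = 0$), the embedding $\iota \colon W^{1,2,2}(0,T;V,H) \hookrightarrow L^2(0,T;H)$ is compact by the Lemma of Aubin--Lions (Theorem \ref{L-Aubin-Lions}, with $q = 2 < \infty$), the trace $\gamma \colon L^2(0,T;H) \to L^2(0,T;H^{-1/2}(\partial\Omega)^3)$ is linear and continuous, and $S(t)$ as defined in \eqref{observation} is linear and continuous from $L^2(0,T;H^{-1/2}(\partial\Omega)^3)$ to $\YY = L^2(0,T;\RR^J)$.

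The key steps, in order, are: first, record that $L$, $\gamma$, $S(t)$ are each bounded linear operators between the indicated Bochner spaces, with norm bounds; in particular $S(t)$ satisfies the uniform bound \eqref{S_uni_bounded} with a constant $c_S$ coming from $\|\chi_j\|_{H^{1/2}}$ and the Cauchy--Schwarz inequality in the dual pairing, so that the induced operator on $L^2$ in time is bounded by the same constant. Second, invoke Aubin--Lions to get that $\iota$ is compact. Third, use the standard fact that the composition of a compact operator with bounded operators (on either side) is compact: thus $K = \gamma\iota L$ is compact, and then $F = S(t)K$ is compact. One should take a bounded sequence $\{f_n\} \subset \XX$, push it through $L$ to a bounded sequence in $W^{1,2,2}$, extract a convergent subsequence under $\iota$, and then apply the continuous operators $\gamma$ and $S(t)$ to preserve convergence.

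I expect no serious obstacle here: the only point requiring a little care is verifying that $S(t)$, which is defined pointwise in $t$ by \eqref{observation}, genuinely induces a bounded linear operator on the Bochner space $L^2(0,T;\RR^J)$ — this follows because the pointwise estimate $\|S(t)g(t,\cdot)\|_{\RR^J} \le c_S\,\|g(t,\cdot)\|_{H^{-1/2}(\partial\Omega)^3}$ is uniform in $t$, so integrating the $q=2$ power in $t$ gives $\|S(\cdot)g\|_{\YY} \le c_S\,\|g\|_{L^2(0,T;H^{-1/2})}$, and strong measurability of $t \mapsto S(t)g(t)$ is inherited from that of $g$ together with continuity of the fixed functional. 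The remainder is the routine "compact $\circ$ bounded is compact" argument, which I would state briefly rather than expand.
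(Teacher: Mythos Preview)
Your proposal is correct and follows essentially the same approach as the paper: the paper's proof simply notes that $\iota$ is compact by Aubin--Lions, hence $K=\gamma\iota L$ is compact, and then $F=S(t)K$ is compact as a composition of a continuous and a compact operator. Your version is in fact more detailed than the paper's, particularly in verifying that the pointwise-defined $S(t)$ lifts to a bounded operator on the Bochner space, which the paper takes for granted.
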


\vspace*{2ex}

\begin{proof}
Since $\iota : W^{1,2,2}(0,T;V,H) \hookrightarrow L^2 (0,T;H)$ is compact, we have that $K$ is compact and so is $F$ as composition of a continuous and compact operator.
\end{proof}

\vspace*{2ex}

\begin{corollary}
The inverse problem of DLM, i.e., $(F,\XX,\YY)$, is uniformly ill-posed.
\end{corollary}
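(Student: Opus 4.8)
The plan is to deduce the uniform ill-posedness of the DLM problem directly from the compactness of $F = S(t)K : \XX \to \YY$ established in the preceding theorem, together with the general principle that a compact linear operator between infinite-dimensional Banach spaces cannot have a continuous inverse. Concretely, I would first observe that since $L$ is a linear operator when $u_0 = u_1 = 0$, and $\gamma$, $\iota$, $S(t)$ are all linear, the operator $F$ is linear with $\mathcal D(F) = \XX$ and $\vartheta^+ = 0$ is a natural reference point (any $\vartheta^+$ works by linearity, shifting sequences by $\vartheta^+$). So it suffices to produce, for each $\rho > 0$, a sequence $\{\vartheta_k^{(\rho)}\} \subseteq B_\rho(0)$ with $\vartheta_k^{(\rho)} \nrightarrow 0$ in $\XX$ but $F(\vartheta_k^{(\rho)}) \to 0$ in $\YY$.

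The key step is to exhibit such a sequence. I would take a bounded sequence $\{w_k\} \subset \XX = L^2(0,T;H)$ that does \emph{not} have a norm-convergent subsequence to $0$ — for instance a sequence that converges weakly but not strongly to $0$, which exists precisely because $\XX$ is infinite-dimensional (e.g.\ built from an orthonormal-type family in $H = L^2(\Omega)^3$, or from highly oscillatory loads). After normalizing so that $\|w_k\|_\XX = \rho/2$ for all $k$, the compactness of $F$ guarantees that $\{F(w_k)\}$ has a subsequence converging in $\YY$; passing to that subsequence (relabelled $\vartheta_k^{(\rho)}$), the weak convergence $\vartheta_k^{(\rho)} \rightharpoonup 0$ forces the limit of $F(\vartheta_k^{(\rho)})$ to be $F(0) = 0$ by weak–strong continuity of the compact operator $F$. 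Meanwhile $\|\vartheta_k^{(\rho)}\|_\XX = \rho/2 \not\to 0$, so $\vartheta_k^{(\rho)} \nrightarrow 0$, and $B_{\rho/2} \subset B_\rho$ gives the required containment. This verifies the definition of uniform ill-posedness.

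I would also note that the definition only requires $\vartheta_k^{(\rho)} \nrightarrow \vartheta^+$, i.e.\ failure of norm convergence along the whole sequence, which the constant norm $\rho/2$ achieves without needing to control convergence on a subinterval $I$ separately; the remark following the definition of uniform ill-posedness then identifies automatically the offending subset $I \subseteq [0,T]$ of positive measure.

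The main obstacle is a soft one: one must make sure the chosen $\{w_k\}$ is genuinely bounded away from $0$ in norm while being weakly null (so that no subsequence escapes to norm convergence), and that $F$ is honestly defined on all of $\XX$ with $\vartheta^+$ in its domain — both of which follow from the linearity and continuity/compactness already proved, so really the only thing to be careful about is invoking the standard fact that weak convergence plus compactness yields strong convergence of images, and that in an infinite-dimensional space weakly null sequences of constant positive norm exist. No delicate estimate with the elastic wave equation or the Aubin--Lions embedding is needed beyond what the previous theorem already supplies.
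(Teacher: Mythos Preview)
Your proposal is correct and takes essentially the same approach as the paper: the paper's proof is the one-line observation that uniform ill-posedness ``follows immediately from the compactness of $F$,'' and you have simply unpacked the standard argument behind that implication (weakly null sequence of constant norm, compact operator sends it to a strongly null sequence). No additional ingredient is needed.
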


\vspace*{2ex}

\begin{proof}
This follows immediately from the compactness of $F$.
\end{proof}

\vspace*{2ex}

If we choose a fixed time $t_0$ and neglect time-dependence, then the DLM problem can be characterized by the elliptic problem
\begin{equation}\label{CE-static}
- \nabla\cdot (\mathbb{C}(x):\varepsilon (u))(x) = f(x),\qquad x\in\Omega,
\end{equation}
with traction-free boundary conditions
\begin{equation}\label{NCond-static}  [\mathbb{C}(x):\varepsilon (u)]\cdot \nu = 0\qquad \mbox{on } \partial\Omega .
\end{equation}
The weak formulation of the Neumann problem \eqref{CE-static}, \eqref{NCond-static} is given as
\begin{equation}\label{weak-formulation-static}
     \int_\Omega (\varepsilon (u),[\mathbb{C}(x):\varepsilon (v)])_F\,\mathrm{d} x = \int_\Omega f(x)\cdot v(x)\,
     \mathrm{d} x   
\end{equation}
for all $v\in V$. Using again \eqref{C}, the Poincar\'{e} inequality as well as the Lax-Milgram theorem, we have that
the equation $A(u) = f$ has a unique solution which depends continuously on $f$. Here, $A: V\to V^\ast$ is the operator
induced by the symmetric bilinear form in \eqref{weak-formulation-static}, i.e.
\[   A(u)[v] := \int_\Omega (\varepsilon (u),[\mathbb{C}(x):\varepsilon (v)])_F\,\mathrm{d} x, \quad v\in V.  \]
If we consider the inverse problem of computing the source term $f$ from full field data $u(x)$, $x\in\Omega$, then
we immediately get the following result.

\vspace*{2ex}

\begin{theorem}
The inverse problem $\widetilde{F}: H\to V$, $\widetilde{F}(f) := u$, where $u$ is the unique weak solution of 
\eqref{CE-static}, \eqref{NCond-static}, is well-posed.
\end{theorem}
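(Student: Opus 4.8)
The plan is to derive the three Hadamard conditions — unique solvability and stability of the reconstruction $u\mapsto f$ of the source from the full-field displacement — as a short corollary of the variational theory already set up for \eqref{weak-formulation-static}, the point being that, unlike in the DLM operator \eqref{F-DLM}, no compact (smoothing) operator sits between source and data in this static, full-field problem.

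Concretely, I would first record the mapping properties of the energy form: by the ellipticity assumption \eqref{C}, the second Korn inequality and the Poincar\'e inequality, the symmetric bilinear form $a(\cdot,\cdot)$ of \eqref{weak-formulation-static} is bounded and coercive on $V$ (after factoring out the infinitesimal rigid-body motions in $\ker\varepsilon$, as in \cite{BINDER;SCHUSTER:15}), while $\mathbb{C}\in H^1$ supplies the boundedness. Lax-Milgram then gives that the induced operator $A\colon V\to V^*$ is a linear isomorphism, with constants $0<c_A\le C_A$ such that $c_A\|v\|_V\le\|Av\|_{V^*}\le C_A\|v\|_V$ for all $v\in V$; equivalently $\widetilde{F}=A^{-1}$ (composed with the embedding $H\hookrightarrow V^*$) is well defined, linear, injective and bounded, which is the well-posedness of the forward problem noted just before the theorem. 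The three conditions for the inverse problem then follow almost mechanically: uniqueness is injectivity of $A$ (if $\widetilde{F}(f_1)=\widetilde{F}(f_2)$ then $(f_1-f_2,v)_H=0$ for all $v\in V$, and density of $V$ in $H=L^2(\Omega)^3$ yields $f_1=f_2$); existence is the tautology that the exact datum lies in the range of $\widetilde{F}$; and stability is the Lipschitz bound $\|f_1-f_2\|_{V^*}=\|A(u_1-u_2)\|_{V^*}\le C_A\|u_1-u_2\|_V$. Since a continuous $\widetilde{F}^{-1}$ rules out any sequence of the kind appearing in the definitions of temporal or uniform ill-posedness in Section~3, the problem is well-posed. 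The structural contrast with DLM is that there the observation $S(t)\gamma\iota L$ contains the compact Aubin-Lions embedding $\iota$, so inverting it means inverting a compact operator, whereas full-field data leaves only the operator $A$, which in the natural energy scale is a bounded bijection with bounded inverse.

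The step that is not a one-line consequence of the above, and which I expect to be the main obstacle, is the bookkeeping of the spaces: Lax-Milgram delivers stability in the pair $(V,V^*)$, whereas the theorem states $\widetilde{F}$ as a map $H\to V$, so recovering $f$ in $H=L^2(\Omega)^3$ and controlling it in the $H$-norm requires more than the energy identity. The natural way to close this gap is elliptic regularity for the traction-free Neumann problem \eqref{CE-static}--\eqref{NCond-static} on the Lipschitz domain $\Omega$ (with $\mathbb{C}\in H^1$ and $\rho$ bounded): it promotes $u$ to $H^2(\Omega)^3$ whenever $f\in H$, with $\|u\|_{H^2}\le C\|f\|_H$, so that the second-order operator $A$ maps that higher-regularity class boundedly and boundedly invertibly onto $H$ and the stability estimate takes the form matching the stated operator. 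Carrying this regularity argument out cleanly, together with the handling of the rigid-body kernel, is where the actual work of the proof lies; everything else is an immediate corollary of the variational setup already in place.
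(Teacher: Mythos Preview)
Your approach is essentially the paper's: its proof is a single sentence stating that well-posedness follows from $\widetilde{F}$ being continuously invertible, i.e.\ exactly the Lax--Milgram argument you spell out. You actually go further than the paper in flagging the $(V,V^*)$ versus $(H,V)$ norm mismatch and proposing elliptic regularity to close it; the paper does not address this gap at all and simply asserts continuous invertibility, so on this point your write-up is the more careful of the two.
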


\vspace*{2ex}

\begin{proof}
The well-posedness follows from the fact that $\widetilde{F}$ is continuously invertible as outlined above.
\end{proof}

\vspace*{2ex}

Of course, if we use the compact embedding $V \hookrightarrow H$, the trace operator $\gamma : L^2 (\Omega)^3
\to H^{-1/2} (\partial\Omega)$ and a (static) observation operator similar to $S$, then we obtain again
a linear, ill-posed (static) inverse problem.

\vspace*{2ex}

The examples above lead to the following general statements.

\vspace*{2ex}

\begin{proposition}
Let $V_1\subset \mathcal{X}\subset V_2$ be Banach spaces with compact embedding $V_1\subset \mathcal{X}$ and continuous embedding $\mathcal{X}\subset V_2$ Furthermore, let
$K: L^p (0,T;\mathcal{X}) \to W^{1,q,r}(0,T;V_1,V_2)$ with $1\leq q<+\infty$, $1\leq r\leq +\infty$, and $S(t): \mathcal{X}\to \mathcal{Y}$ is a family of uniformly bounded operators in $\mathcal{L}(\mathcal{X},\mathcal{Y})$. \\
Then $F : L^p (0,T;\mathcal{X}) \to L^q (0,T,\mathcal{Y})$, $F[\vartheta (t)] := S(t) K[\vartheta (t)]$ is compact and the dynamic inverse problem $(F,L^p (0,T;\mathcal{X}),L^q (0,T,\mathcal{Y}))$ is uniformly ill-posed. In case we have for any fixed $t=t_0$ that $K: \mathcal{X}\to V_1$ is continuous, then $\widetilde{F} : \mathcal{X} \to \mathcal{Y}$, $\widetilde{F}[\vartheta] := S (t_0) K[\vartheta]$, is compact and $(F,L^p (0,T;\mathcal{X}),L^q (0,T,\mathcal{Y}))$ is also temporally ill-posed. If for any $t_0$ the operator $S(t_0) K$ is continuously invertible, then
the problem $(F,L^p (0,T;\mathcal{X}),L^q (0,T,\mathcal{Y}))$ is temporally well-posed.
\end{proposition}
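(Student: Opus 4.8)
The plan is to prove the three assertions in turn, each reducing to a result already established in the paper. The overall strategy is to recognize the proposition as an abstract template that literally generalizes the DLM example: the Aubin--Lions embedding $W^{1,q,r}(0,T;V_1,V_2)\hookrightarrow L^q(0,T;V_2)$ plays the role of $\iota$, the continuous embedding $\mathcal{X}\subset V_2$ lets us view the target of $K$ inside $L^q(0,T;\mathcal{X})$ (or rather, we compose with the embedding $L^q(0,T;V_1)\hookrightarrow L^q(0,T;\mathcal{X})$), and the uniformly bounded family $S(t)$ plays the role of $S(t)\gamma$.

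\emph{Compactness and uniform ill-posedness.} First I would argue that $K:L^p(0,T;\mathcal{X})\to L^q(0,T;\mathcal{X})$ is compact. By hypothesis $K$ maps continuously into $W^{1,q,r}(0,T;V_1,V_2)$; by the Lemma of Aubin--Lions (Theorem~\ref{L-Aubin-Lions}), since $q<+\infty$, the embedding $W^{1,q,r}(0,T;V_1,V_2)\hookrightarrow L^q(0,T;V_1)$ is compact; and since $V_1\subset\mathcal{X}$ is continuous, Proposition~1(v) gives that $L^q(0,T;V_1)\hookrightarrow L^q(0,T;\mathcal{X})$ is continuous. (A small wrinkle: the Aubin--Lions statement as quoted maps into $L^q(0,T;H)$ with $H$ the pivot space; here I would apply it with pivot $V_2$ and note that the compact target can be taken to be $L^q(0,T;V_1)$, which is the form actually needed, or simply reprove compactness into $L^q(0,T;\mathcal{X})$ directly via Theorem~\ref{T-Gahn-Radu} using that bounded sets in $W^{1,q,r}$ have equicontinuous translates in $L^q(0,T;V_2)$ and relatively compact mean values in $\mathcal{X}$.) Then $F[\vartheta(t)]=S(t)K[\vartheta(t)]$ is the composition of the compact operator $K:L^p(0,T;\mathcal{X})\to L^q(0,T;\mathcal{X})$ with the bounded operator $\vartheta\mapsto (t\mapsto S(t)\vartheta(t))$ from $L^q(0,T;\mathcal{X})$ to $L^q(0,T;\mathcal{Y})$ — bounded precisely because $\sup_t\|S(t)\|\leq c_S$ — hence $F$ is compact. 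Uniform ill-posedness then follows exactly as in the Corollary after Theorem~``$F=S(t)K$ is compact'': a compact linear operator between infinite-dimensional Banach spaces cannot have continuous inverse, so one extracts the required sequence $\{\vartheta^{(\rho)}_k\}$ from the image of a bounded non-precompact set under $F$, rescaled into $B_\rho(\vartheta^+)$.

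\emph{Temporal ill-posedness.} For the second claim, fix $t_0$ and assume $K:\mathcal{X}\to V_1$ is continuous. Then $\widetilde{F}=S(t_0)K:\mathcal{X}\to\mathcal{Y}$ is the composition of the continuous operator $K:\mathcal{X}\to V_1$, the compact embedding $V_1\hookrightarrow\mathcal{X}$, and the bounded operator $S(t_0)$; hence $\widetilde{F}$ is compact (or, more directly, $V_1\hookrightarrow\mathcal{X}$ compact makes $K:\mathcal{X}\to\mathcal{X}$ compact and $S(t_0)$ is bounded). If $\dim\mathcal{Y}=\infty$ so that the range is genuinely infinite-dimensional, then $F_{t_0}=\widetilde{F}$ is ill-posed at any point, and by Definition~3.1 (temporal ill-posedness) the dynamic problem is temporally ill-posed; this mirrors the Proposition on time-dependent observations of compact operators. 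The last sentence is immediate: if $S(t_0)K$ is continuously invertible for every $t_0$, then each $F_{t_0}$ is well-posed, which is the definition of temporal well-posedness.

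The main obstacle I anticipate is purely bookkeeping around function-space embeddings: making sure the Aubin--Lions conclusion is applied in the correct form (target space $L^q(0,T;V_1)$, pivot $V_2$, with $H$ in Theorem~\ref{L-Aubin-Lions} reinterpreted), and chaining it with Proposition~1(v) so that $K$ lands compactly in $L^q(0,T;\mathcal{X})$ rather than merely in $L^q(0,T;V_1)$; and, for the temporal statement, being careful that "temporally ill-posed" in the sense of Definition~3.1 requires restrictions of $\vartheta\in C([0,T];\mathcal{X})\cap\mathcal{D}(F)$, so one should note that for the \emph{linear} setting here $\mathcal{D}(F_{t_0})$ is dense enough that the compactness argument produces perturbations within it. None of this is deep; the substance is entirely contained in Theorems~\ref{L-Aubin-Lions} and the compactness $+$ standard-ill-posedness arguments already used for DLM.
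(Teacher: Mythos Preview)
Your proposal is correct and follows essentially the same approach as the paper: the paper's proof is a single sentence invoking the compact embedding $V_1\subset\mathcal{X}$ together with the Aubin--Lions lemma to conclude that $W^{1,q,r}(0,T;V_1,V_2)\hookrightarrow L^q(0,T;\mathcal{X})$ is compact, from which everything follows. Your write-up simply fills in the details the paper omits (composition with the uniformly bounded $S(t)$, the pointwise compactness argument for temporal ill-posedness, and the trivial temporal well-posedness direction); the only blemish is the slightly garbled aside about the ``compact target $L^q(0,T;V_1)$'' --- the correct general Aubin--Lions form lands compactly in $L^q(0,T;\mathcal{X})$, which is exactly what the paper uses and what your alternative via Theorem~\ref{T-Gahn-Radu} would also yield.
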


\vspace*{2ex}

\begin{proof}
The proof essentially relies on the compact embedding $V_1\subset \mathcal{X}$ as well as on the Lemma of Aubin-Lions which states that the embedding  $W^{1,q,r}(0,T;V_1,V_2)\hookrightarrow L^q (0,T;\mathcal{X})$ is compact.
\end{proof}

\vspace*{2ex}

%HIER GEHTS WEITER

It is now interesting to consider situations where the operator equation \eqref{ip} is temporally ill-posed but not uniformly ill-posed.
This is important to show that these are in fact different concepts with each of it having a justification of its own. As long as $K$ is
compact, $S(t)$ is linear and bounded for all $t\in [0,T]$ and the $f_n=F\vartheta_n$ are strongly measurable, parts (b) and (c) of the proof
will remain valid. Example \ref{E-nonuniform} shows a situation where the condition of uniform $L^q$-integrability of $\{f_n\}$ fails.

\vspace*{2ex}

\begin{example}\label{E-nonuniform}
We assume that the parameter $\vartheta$ to be recovered does not depend on time, i.e. $\vartheta (t) = \vartheta\in\X$ and use the embedding
$\iota : \X \hookrightarrow \XX=L^p (0,T;\X)$ which is defined by $x\mapsto f_x$ with $f_x (t) = x$. The range of this embedding consists of all
functions in $\XX$ that are constant in time and we write $\iota (X) =: L^p_c (0,T,\X) \subset L^p (0,T;\X)$. The forward operator $F:\iota(X)\subset\XX\to\YY$
is supposed to be given as 
\begin{equation}\label{Setting-constant}  [F \vartheta] (t) := S(t) K[\vartheta] ,\qquad \vartheta\in L_c^p(0,T;\X) \end{equation}
for a linear, compact operator $K:\X\to\Y$ and a family of linear, bounded mappings
$\{ S(t) :\Y\to\Y : t\in (0,T)\}$. Setting \eqref{Setting-constant} represents the important situation that we have to reconstruct a parameter that has only a spatial
variable from time-dependent data. Such a situation is, e.g., given in seismology, see \cite{KIRSCH;RIEDER:14}, where wave speed and mass density are computed from the full waveform. As a simple example we define
$S(t) = \frac{1}{t} I$ yielding $[F \vartheta] = \frac{1}{t} K[\vartheta]$ for $\vartheta\in L_c^p(0,T,\X)$, $t\in (0,T)$, and compact $K$.
Let $\{\vartheta_n\}_{n\in\NN}$ be a bounded sequence in $L_c^p(0,T,\X)$. Then, the sequence $\{f_n:= F\vartheta_n\}_{n\in\NN}$ is not
uniformly $L^q$-integrable and hence $F:\iota(X)\subset\XX\to\YY$ is not compact. This follows immediately from 
\begin{eqnarray*}
\lim_{k\to\infty} \sup_{n\in\NN} \int_{\{t:\|f_n(t)\|\geq k\}} \|f_n (t)\|^q\, \dd t &\geq &
\lim_{k\to\infty} \int_{\{t:\|f_{n^*}(t)\|\geq k\}} \|f_{n^*} (t)\|^q\, \dd t\\
&=& \lim_{k\to\infty} \|K[\vartheta_{n^*}]\|^q \int_0^{\|K[\vartheta_{n^*}]\|/k} \frac{1}{t^q}\, \dd t = +\infty
\end{eqnarray*}
for some $n^*\in\NN$ fixed. This means that $F \vartheta=y$ is not uniformly ill-posed provided that $K$ has an inverse which is bounded on $K(\iota(X))\subset \YY$.
But obviously, for fixed $t\in (0,T)$, $S(t) K[\vartheta]$ is compact as an operator from $\X$ to $\Y$ and hence $F \vartheta = y$
is temporally ill-posed.
\end{example}

\vspace*{2ex}

As a consequence of the considerations in Example \ref{E-nonuniform} we obtain\\

\begin{corollary}
The subset $\iota (\X)\subset \XX$ is not compact.
\end{corollary}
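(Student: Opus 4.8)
The plan is to deduce the non-compactness of $\iota(\X)$ inside $\XX=L^p(0,T;\X)$ directly from the mechanism exhibited in Example \ref{E-nonuniform}, rather than re-proving it from scratch. The key observation is that in that example the failure of compactness of $F:\iota(\X)\subset\XX\to\YY$, $F=\tfrac1t K$, was traced back entirely to the fact that a bounded sequence $\{\vartheta_n\}\subset\iota(\X)$ gives rise to a sequence $\{f_n=F\vartheta_n\}$ that is not uniformly $L^q$-integrable. If $\iota(\X)$ were compact in $\XX$, then every bounded sequence in $\iota(\X)$ would have a convergent subsequence in $\XX$; applying the continuous operator $F$ (continuity of $S(t)K=\tfrac1t K$ on the constant-in-time functions, mapping into $\YY$, is part of the setup of the example, since $\int_0^T t^{-q}\|K\vartheta\|^q\dd t<\infty$ is \emph{not} what we need — we actually need boundedness, so let me instead argue via $K$ alone) would produce a convergent subsequence of $\{f_n\}$ in $\YY$, contradicting the explicit computation there showing $\{f_n\}$ fails to be uniformly $L^q$-integrable (and hence, by Proposition \ref{P-set-rel-comp}, fails to be relatively compact in $\YY$).

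More carefully, the cleanest route is the following. First I would pick any $x\in\X$ with $\|Kx\|_{\Y}\neq 0$ (such $x$ exists unless $K\equiv 0$, a trivial case one can dispose of separately, noting that if $K=0$ the argument must instead use a genuinely compact-range-free $K$ as in the standing assumptions of the section; in the nontrivial case we proceed). Consider the constant sequence, or better a bounded non-precompact sequence: take $\{x_n\}\subset\X$ bounded with no $\X$-convergent subsequence (possible whenever $\dim\X=\infty$, which is the relevant case), and set $\vartheta_n:=\iota(x_n)\in\iota(\X)$, so $\|\vartheta_n\|_{\XX}=T^{1/p}\|x_n\|_{\X}$ is bounded. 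If $\iota(\X)$ were relatively compact in $\XX$, a subsequence $\vartheta_{n_k}\to g$ in $\XX=L^p(0,T;\X)$; since $\iota(\X)=L^p_c(0,T;\X)$ is a closed subspace (it is the kernel of $f\mapsto f-\frac1T\int_0^T f$, or simply: an $L^p$-limit of time-constant functions is time-constant a.e.), $g=\iota(x)$ for some $x\in\X$, and then $\|x_{n_k}-x\|_{\X}=T^{-1/p}\|\vartheta_{n_k}-g\|_{\XX}\to 0$, contradicting the choice of $\{x_n\}$. Hence $\iota(\X)$ is not compact.

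The main obstacle — really the only point needing care — is the hypothesis under which the statement is true. The corollary as stated is unconditional, but it plainly requires $\dim\X=\infty$ (if $\X$ is finite-dimensional, $\iota(\X)$ is a finite-dimensional, hence closed but certainly not \emph{compact} subspace — though it is still not compact, being an unbounded linear subspace, so actually the statement holds for \emph{any} nonzero $\X$). Indeed, for any Banach space $\X\neq\{0\}$, $\iota(\X)$ is an infinite-set linear subspace and therefore unbounded in $\XX$, so it cannot be relatively compact; this gives a one-line proof independent of Example \ref{E-nonuniform}. I would therefore present the short argument — $\iota$ is an isometric (up to the factor $T^{1/p}$) linear embedding onto a closed subspace, and a nonzero linear subspace of a normed space is never relatively compact because it is unbounded — and then remark that the construction in Example \ref{E-nonuniform} gives a more quantitative witness to this failure, namely that even the image under the compact-operator-built map $F$ of a bounded piece of $\iota(\X)$ need not be relatively compact, which is the phenomenon actually relevant to the ill-posedness discussion.
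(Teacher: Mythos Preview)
Your proposal is correct, and the final argument you settle on --- that $\iota$ is, up to the factor $T^{1/p}$, a linear isometric embedding, so $\iota(\X)$ is a nonzero linear subspace of $\XX$, hence unbounded, hence neither compact nor relatively compact --- is the cleanest possible route and works for any $\X\neq\{0\}$.

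The paper itself gives no explicit proof: it merely records the Corollary ``as a consequence of the considerations in Example~\ref{E-nonuniform}'' and then remarks, in the following sentence, that uniform $L^p$-integrability (Definition~\eqref{eq-uniformly-lp-integrable}) fails for $\iota(\X)$, yielding non-relative-compactness. Your initial attempt to extract a proof from the Example by arguing ``$F$ continuous, $\iota(\X)$ compact $\Rightarrow$ $F(\iota(\X))$ compact, contradicting the Example'' is the natural reading of the paper's intent, but as you correctly observe it has a wrinkle: with $S(t)=t^{-1}I$ the operator $F=\tfrac{1}{t}K$ is not even bounded from $\iota(\X)$ into $\YY=L^q(0,T;\Y)$ (since $t^{-1}\notin L^q(0,T)$), so continuity cannot be invoked directly. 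Your pivot to the elementary unboundedness argument sidesteps this entirely and subsumes both the Corollary and the paper's subsequent remark on relative compactness. What the Example-based route would buy, if one patched it (e.g.\ by working on bounded pieces of $\iota(\X)$ and using only the compactness of $K$), is the stronger information actually relevant to the surrounding discussion: even \emph{bounded} subsets of $\iota(\X)$ can fail to map to relatively compact sets under $F$. Your closing remark captures this distinction well.
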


\vspace*{2ex}

A simple application of Definition \eqref{eq-uniformly-lp-integrable} furthermore shows that $\iota (X)$ is not relatively compact in $\XX$.

\vspace*{2ex}

%{\color{red} Ich habe l\"anger versucht, ein allgemeineres Kriterium zu finden für einen Operator der zwar punktweise, aber nicht gleichm\"a{\ss}ig schlecht
%gestellt ist. Das ist mir bislang aber nicht gelungen. Wenn jemand eine Idee hat, nur zu ... :-) }

%%%%%%%%%%%%%%%%%%%%%%%%%%%%%%%%%%%%%%%%%%%%%%%%%%%%%%%%%%%%%%%%%%%%%%%%%%%%%%%

\section{Regularization of time-dependent inverse problems}

In this section we define problem adapted classes of regularization methods for dynamic inverse problems that address the two different sorts
of ill-posedness. Again we conisder the inverse problem \eqref{ip} and aim for a stable solution of
\begin{equation}\label{ipdelta}
F (\vartheta) = y^\delta,
\end{equation}
where $y^\delta\in \YY$ denotes a noise contaminated version of the exact data $y$, i.e.
\begin{equation}\label{noisy_data}
\|y-y^\delta\|_{\YY} < \delta
\end{equation}
for a (small) positive noise level $\delta>0$. We assume that
\begin{equation}\label{cond_noise}  y^\delta (t) \in F_t (\mathcal{D}(F_t))\qquad \mbox{for all } t\in [0,T]  \end{equation}
and that there exists a solution $\vartheta^+$ of \eqref{ip} with
\begin{equation}\label{cond_exact}  \vartheta^+ \in \big( C([0,T];\X )\cap \mathcal{D}(F) \big)  \end{equation}
which implies that also $y(t)$ is well-defined and
\[  y (t)\in F_t (\mathcal{D}(F_t))\qquad \mbox{for all } t\in [0,T] . \]
We note that condition \eqref{cond_noise} is not an essential restriction with respect to applications. Usually data are acquired
for discrete time instances $t_k\in [0,T]$, $k=0,1,\ldots$, only. Hence data $y^\delta (t)$ that are continuous in time, can be obtained
by simple interpolation, e.g. using piecewise linear spline functions. Condition \eqref{cond_exact} can be justified by the fact
that $C([0,T];\X )\cap \mathcal{D}(F)$ is dense in $\mathcal{D}(F)$ and the fact that in applications the temporal development of the exact solution
mostly is continuous in time (at least this is not an essential confinement).\\

\begin{definition}\label{def_reg_t}
A \emph{temporal (pointwise) regularization method} for \eqref{ipdelta} is a family of mappings $\widetilde{R} : [0,T] \times \Y \times [0,+\infty) \to \X$
that satisfies the following condition: For $x_{t,\alpha}^\delta := \widetilde{R} (t, y^\delta (t), \alpha)$ there exists a \emph{parameter choice}
$\alpha: [0,T]\times [0,+\infty) \times \Y \to [0,\bar{\alpha})$, $0<\bar{\alpha}\leq +\infty$, such that
\[   \lim_{\delta\to 0} \| \vartheta^+ (t) - x_{t,\alpha (t,\delta,y^\delta (t))}^\delta \|_{\X} = 0\qquad \mbox{for all } t\in [0,T].   \]
\end{definition}

Definition \ref{def_reg_t} reflects the fact that dynamic inverse problems can be solved by defining a partition $\Delta=\{0=t_0<t_1<\cdots < t_N=T\}$
of $[0,T]$ and using a stationary regularization method for $F_{t_k}:\mathcal{D}(F_{t_k})\subset \X\to\Y$ for each $t_k$. This procedure is called 
\emph{tracking}.

\vspace*{2ex}

\begin{remark}
Obviously a temporal regularization yields an element $x_{t,\alpha}^\delta\in\X$ for $t\in [0,T]$ fixed. A stable regularization of \eqref{ipdelta}, however, demands 
for a solution in the Lebesgue-Bochner space $\XX$. But this can easily we achieved by simple interpolation techniques. Again assume that we have $x_{t_k,\alpha}^\delta\in\X$
be given for $t_k\in \Delta$, $k=0,\ldots,N$. Define $I \{x_{t_k,\alpha}^\delta\}$ as the piecewise constant interpolation which is defined as 
\[ I \{x_{t_k,\alpha}^\delta\}  (t) =  x_{t_k,\alpha}^\delta\qquad \mbox{for } t\in [t_k, t_{k+1}),\qquad k=0,\ldots,N-1.\]
Because of
\begin{eqnarray*}  \int_0^T \| I \{ x_{t_k,\alpha}^\delta \}  (t) \|^p_{\X}\, \dd t &=& \sum_{k=0}^{N-1} \int_{t_{k}}^{t_{k-1}} \| x_{t_k,\alpha}^\delta \|^p_{\X}\, \dd t\\
&\leq & T \max\{ \| x_{t_k,\alpha}^\delta \|^p_{\X} : k=0,\ldots,N-1 \} < +\infty   
\end{eqnarray*}
we see that $I \{ x_{t_k,\alpha}^\delta\}\in\XX$. Of course other interpolation methods, such as piecewise linear interpolation, can be used to obtain solutions that are smooth with respect to $t$.  
We emphasize that we see temporal regularization not as a regularization method in the Lebesgue-Bochner spaces $\XX$, $\YY$, rather than as regularization in $\X$, $\Y$ for fixed $t\in [0,T]$,
which is the core idea of tracking methods.
\end{remark}

\vspace*{2ex}

In contrast to tracking, problem \eqref{ip} can also be solved uniformly in $t$.

\vspace*{2ex}

\begin{definition}\label{def_reg}
A \emph{full (uniform) regularization method} for \eqref{ipdelta} is a family of mappings $R : \YY\times [0,+\infty) \to \XX$ that satisfies
the following condition: For $x_\alpha^\delta := R(y^\delta,\alpha)$ there exists a \emph{parameter choice} $\alpha: \YY\times [0,+\infty) \to
[0,\bar{\alpha})$, $0<\bar{\alpha}\leq +\infty$, such that 
\[   \lim_{\delta \to 0} \|\vartheta^+ - x_{\alpha(y^\delta,\alpha)}^\delta\|_{\XX} = 0.  \]
\end{definition}

%%%%%%%%%%%%%%%%%%%%%%%%

\subsection{Example 1: variational tracking by Tikhonov functionals}

%%%%%%%%%%%%%%%%%%%%%%%%

We define the functional $\J_{t,\alpha}^\delta : \mathcal{D}(F_t)\subset \X\to \RR$ by
\begin{equation}\label{Tikhonov-t}  
\J_{t,\alpha}^\delta (x) := S_t \big( F_t (x), y^\delta (t)\big) + \alpha_t \Omega_t (x),\qquad t\in [0,T] , \quad x\in\X,
\end{equation}
where $S_t : \Y\times \Y \to [0,+\infty)$ is a functional defining the data fitting term, $\Omega_t : \X\to [0,+\infty]$
is a penalty term to stabilize the reconstruction process and $\alpha_t>0$ acts as the regularization parameter. Note that we allow for the data fitting term, the penalty term, and the regularization parameter to depend on $t$.

\vspace*{2ex}

Based on the well-established theory for regularization methods in Banach spaces, see 
\cite{BENNING;BURGER:18, kazimierski:tikhonov1, scherzerpoeschl:convergencerates, skhk12}, we formulate assumptions
on $F_t$, $\mathcal{D}(F_t)$ and $\Omega_t$ that the minimization of \eqref{Tikhonov-t} yields a temporal regularization
for the special case that 
\begin{equation}\label{S_t}  
S_t ( \tilde{y}, \tilde{y}^\delta ) := \frac{1}{r} \big\|  \tilde{y}_1 - \tilde{y}_2  \big\|^r_{\Y},\qquad \tilde{y}_1,\tilde{y}_2\in\Y  
\end{equation}
with $1<r<\infty$.\\

\begin{proposition}\label{P-Tikhonov-t}
Under the assumptions that $\X$, $\Y$ are reflexive Banach spaces, $C([0,T];\X )\cap \mathcal{D}(F)$ is dense in $\mathcal{D}(F)$, and that for
\emph{fixed} $t\in [0,T]$ we have that $F_t : \mathcal{D}(F_t)\subset \X\to \Y$ is weak-to-weak sequentially continuous,
$\Omega_t : \X\to [0,+\infty]$ is proper, convex and lower semi-continuous, $\mathcal{D}(F_t)\cap \mathcal{D}(\Omega_t)
\not= \emptyset$ and the level sets $\mathcal{M}_{\Omega_t} := \{ x\in\X : \Omega_t\leq c\}$ are weakly sequentially 
pre-compact. Furthermore we choose an index function $\alpha_t: [0,\infty)\to [0,\infty)$, i.e. $\alpha_t$ is strictly
increasing, continuous with $\alpha_t (0) = 0$, which has the asymptotic behavior
\[  \alpha_t (\delta) \to 0 \qquad \mbox{and}\qquad \frac{\delta^r}{\alpha_t (\delta)} \to 0\qquad \mbox{as}\qquad
    \delta\to 0 .  \]
Then the Tikhonov functional \eqref{Tikhonov-t} with data fitting term \eqref{S_t} has a minimizer $x_{t,\alpha}^\delta\in \mathcal{D}(F_t)$ and 
$\widetilde{R}(t,y^\delta (t),\alpha):=x_{t,\alpha}^\delta$ is a temporal regularization method in the sense that,
if $\{\delta_n\}\subset (0,\infty)$ is a sequence with $\delta_n\to 0$ as $n\to\infty$, the sequence
$\{x_{t,\alpha (\delta_n)}^{\delta_n}\}$ has a weakly converging subsequence whose weak limit is an $\Omega_t$-minimizing
solution $\vartheta^+ (t)\in \X$ of $F_t (\vartheta(t)) = y(t)$.\\
\end{proposition}

Since for fixed $t$ the operator $F_t [\vartheta(t)] := S(t) K[\vartheta (t)]$ from \eqref{F_bounded_compact} is linear and compact,
we immediately get\\

\begin{corollary}
If $C([0,T];\X )\cap \mathcal{D}(F)$ is dense in $\mathcal{D}(F)$ and $\Omega_t$ satisfies the assumptions in Proposition \ref{P-Tikhonov-t},
then the Tikhonov regularization \eqref{Tikhonov-t}, \eqref{S_t} yields a temporal regularization method for \eqref{F_bounded_compact} in
the sense of Proposition \ref{P-Tikhonov-t}.\\
\end{corollary}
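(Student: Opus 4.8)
The plan is simply to verify that the hypotheses of Proposition~\ref{P-Tikhonov-t} are all met by the particular operator $F_t = S(t)K$ from \eqref{F_bounded_compact}, after which the claim follows verbatim from that proposition. The reflexivity of $\X$ and $\Y$ and the structural properties of $\Omega_t$ (proper, convex, lower semi-continuous, level sets weakly sequentially pre-compact, $\mathcal{D}(F_t)\cap\mathcal{D}(\Omega_t)\neq\emptyset$) are part of what is assumed, and the density of $C([0,T];\X)\cap\mathcal{D}(F)$ in $\mathcal{D}(F)$ is assumed as well. Hence the only item that genuinely needs an argument is the weak-to-weak sequential continuity of $F_t$ for fixed $t$.

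First I would recall, as already observed in the text preceding the proposition, that for every fixed $t_0\in[0,T]$ the operator $F_{t_0}:\mathcal{X}\to\mathcal{Y}$, $F_{t_0}(x) = S(t_0)K[x]$, is the composition of the compact linear operator $K$ with the bounded linear operator $S(t_0)$, hence itself linear and compact --- in particular bounded. A bounded linear operator between Banach spaces is weak-to-weak sequentially continuous: if $x_n\rightharpoonup x$ in $\X$, then for every $y^*\in\Y^*$ we have $\langle y^*, F_{t_0}x_n\rangle = \langle F_{t_0}^* y^*, x_n\rangle \to \langle F_{t_0}^* y^*, x\rangle = \langle y^*, F_{t_0}x\rangle$. (Compactness in fact yields the stronger weak-to-norm sequential continuity, but weak-to-weak is all that Proposition~\ref{P-Tikhonov-t} requires.) Restricting $F_{t_0}$ to the domain $\mathcal{D}(F_{t_0})$ does not affect this property, since $F_{t_0}$ is globally defined and bounded on all of $\X$.

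Next I would fix an admissible index function, for instance $\alpha_t(\delta) := \delta^{s}$ with any $0 < s < r$; this is strictly increasing and continuous with $\alpha_t(0) = 0$, and it satisfies $\alpha_t(\delta)\to 0$ and $\delta^r/\alpha_t(\delta) = \delta^{r-s}\to 0$ as $\delta\to 0$, exactly as required. With all hypotheses of Proposition~\ref{P-Tikhonov-t} now in place, that proposition yields the existence of a minimizer $x_{t,\alpha}^\delta\in\mathcal{D}(F_t)$ of the Tikhonov functional \eqref{Tikhonov-t} with data-fitting term \eqref{S_t}, together with the asserted subsequential weak convergence of $\{x_{t,\alpha(\delta_n)}^{\delta_n}\}$ to an $\Omega_t$-minimizing solution of $F_t(\vartheta(t)) = y(t)$; that is, $\widetilde{R}(t,y^\delta(t),\alpha) := x_{t,\alpha}^\delta$ is a temporal regularization method in the sense of Definition~\ref{def_reg_t}.

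There is essentially no hard step here: the whole content is the observation that compactness --- indeed, mere boundedness --- of the linear operator $F_t$ automatically delivers the weak-to-weak continuity demanded by Proposition~\ref{P-Tikhonov-t}. The only point requiring a moment's care is the bookkeeping around the domain $\mathcal{D}(F_t)$, which is defined through traces of functions in $C([0,T];\X)\cap\mathcal{D}(F)$; since $F_t$ extends to a bounded operator on all of $\X$, its restriction inherits the needed continuity, and the nonemptiness of $\mathcal{D}(F_t)\cap\mathcal{D}(\Omega_t)$ is one of the hypotheses carried over by assumption.
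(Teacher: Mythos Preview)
Your proposal is correct and follows essentially the same approach as the paper, which simply notes that since $F_t = S(t)K$ is linear and compact for each fixed $t$, the hypotheses of Proposition~\ref{P-Tikhonov-t} are satisfied and the corollary follows immediately. You have merely spelled out in detail the one substantive step---that a bounded (indeed compact) linear operator is weak-to-weak sequentially continuous---which the paper leaves implicit.
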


\begin{remark}
Variational tracking as in Proposition \eqref{P-Tikhonov-t} naturally leads to \emph{dynamic algorithms} according to Osipov et al. \cite{OSIPOV;ET;AL:00},
i.e., if two data sets $y_1^\delta$, $y_2^\delta$ coincide at a given time interval, $y_1^\delta (t) = y_2^\delta (t)$ for all $t\in [0,t_0]$
for given $t_0\in (0,T]$, then the algorithm's output coincides on this time interval as well. In our setting it is quite obvious that
under this assumption we have $\widetilde{R}(t,y_1^\delta (t),\alpha) = \widetilde{R}(t,y_2^\delta (t),\alpha)$ for all $t\in [0,t_0]$.
The reason is that variational tracking just means to compute a temporal frame of stationary solutions. Dynamic algorithms in
the sense of Osipov et al. inherently show causality since, if $y_1^\delta (t)\not= y_2^\delta (t)$ for a $t>t_0$, this does not affect the
output in the interval $[0,t_0]$.
\end{remark}

\vspace*{2ex}

%{\color{red} Was wissen wir f\"ur andere Wahlen von $S_t$?}\\

%%%%%%%%%%%%%%%%%%%%%%%%

\subsection{Example 2: variational regularization on Lebesgue-Bochner spaces}

%%%%%%%%%%%%%%%%%%%%%%%%

Temporal regularization completely neglects topology, regularity and geometry of the 
corresponding functional time-space. To obtain a holistic regularization it is more convenient to develop regularization methods for
\eqref{ipdelta} in $\XX$ and $\YY$. One possibility is to use variational regularization techniques. To this end we define the Tikhonov functional
$\mathcal{J}_\alpha^\delta : \mathcal{D}(F) \subset \XX\to \RR$ by
\begin{equation}\label{Tikhonov-uniform}
\mathcal{J}_\alpha^\delta (\vartheta) := S \big( F(\vartheta),y^\delta \big) + \alpha \Omega (\vartheta),\qquad \vartheta\in\XX,
\end{equation}
where, again, $S: \YY \times \YY\to [0,+\infty )$ is an error functional denoting the data fitting term and $\Omega : \XX \to [0,+\infty]$ is
a penalty term whose influence is controlled by the parameter $\alpha>0$. Of course the most popular choice for the data fitting term
is again a power of the norm residual
\begin{equation}\label{S-uniform}
S (y,y^\delta) := \frac{1}{r} \big\|y_1-y_2\big\|_{\YY}^r,\qquad y_1,y_2\in \YY
\end{equation}
with $1<r<\infty$.

\vspace*{2ex}

Accordingly, we can use established results on regularization theory (see again \cite{BENNING;BURGER:18, kazimierski:tikhonov1, scherzerpoeschl:convergencerates, skhk12}) to state the following result:

\vspace*{2ex}

\begin{proposition}\label{P-Tikhonov-uniform}
Let us assume that $\X$, $\Y$ are reflexive Banach spaces with $1<p<\infty$. Then the forward operator $F:\mathcal{D}(F)\subset \XX\to \YY$ is weak-to-weak
sequentially continuous, $\Omega : \XX\to [0,+\infty]$ is proper, convex and lower semi-continuous, $\mathcal{D}(F)\cap \mathcal{D}(\Omega)
\not= \emptyset$ and the level sets $\mathcal{M}_{\Omega} := \{ x\in\X : \Omega\leq c\}$ are weakly sequentially pre-compact.
Furthermore we choose an index function $\alpha: [0,\infty)\to [0,\infty)$, i.e. $\alpha$ is strictly
increasing, continuous with $\alpha (0) = 0$, that has the asymptotic behavior
\begin{equation}\label{alpha-a-priori} 
\alpha (\delta) \to 0 \qquad \mbox{and}\qquad \frac{\delta^r}{\alpha (\delta)} \to 0\qquad \mbox{as}\qquad
\delta\to 0 .  \end{equation}
Then the Tikhonov functional \eqref{Tikhonov-uniform} with data fitting term \eqref{S-uniform} has a minimizer $x_{\alpha}^\delta\in \mathcal{D}(F)$ and 
$R(y^\delta,\alpha):=x_{\alpha}^\delta$ is a full (uniformly) regularization method in the sense that,
if $\{\delta_n\}\subset (0,\infty)$ is a sequence with $\delta_n\to 0$ as $n\to\infty$, then the sequence
$\{x_{\alpha (\delta_n)}^{\delta_n}\}$ has a weakly converging subsequence whose weak limit is a $\Omega$-minimizing
solution $\vartheta^+ \in \XX$ of $F (\vartheta) = y$.\\
\end{proposition}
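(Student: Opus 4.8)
The plan is to invoke the standard theory of Tikhonov regularization in Banach spaces, which applies verbatim once the hypotheses of Proposition \ref{P-Tikhonov-uniform} are recognized to be exactly the abstract assumptions used in that theory (see the cited references). Concretely, I would first observe that, since $\X$ and $\Y$ are reflexive and $1<p<\infty$, Proposition 1(iv) gives that $\XX=L^p(0,T;\X)$ and $\YY=L^q(0,T;\Y)$ are reflexive Banach spaces (for reflexivity of $\YY$ one uses $1<q<\infty$, which should be assumed alongside $1<p<\infty$). Thus bounded sequences in $\XX$ and $\YY$ have weakly convergent subsequences, and the norm residual $S(y_1,y_2)=\frac1r\|y_1-y_2\|_\YY^r$ is convex, continuous, and weakly lower semicontinuous on $\YY\times\YY$.

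Second, I would establish \emph{existence} of a minimizer $x_\alpha^\delta$ of $\J_\alpha^\delta$ for each fixed $\alpha>0$ and $\delta$. Take a minimizing sequence $\{\vartheta_n\}\subset\mathcal{D}(F)\cap\mathcal{D}(\Omega)$ (nonempty by hypothesis); since $\J_\alpha^\delta(\vartheta_n)$ is bounded and $S\geq 0$, the penalty values $\Omega(\vartheta_n)$ are bounded, so $\{\vartheta_n\}$ lies in a level set $\mathcal{M}_\Omega$, which is weakly sequentially pre-compact; extract a weakly convergent subsequence $\vartheta_{n_k}\rightharpoonup\bar\vartheta$. Weak-to-weak sequential continuity of $F$ gives $F(\vartheta_{n_k})\rightharpoonup F(\bar\vartheta)$, and weak lower semicontinuity of $\|\cdot\|_\YY^r$ and of $\Omega$ yields $\J_\alpha^\delta(\bar\vartheta)\leq\liminf\J_\alpha^\delta(\vartheta_{n_k})$, so $\bar\vartheta=:x_\alpha^\delta$ is a minimizer (one also needs $\mathcal{D}(F)$ to be weakly sequentially closed, which is implicit in the weak-to-weak continuity assumption on $F$ together with closedness of $\mathcal{D}(\Omega)$ — this is part of the standard setup and I would state it as such).

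Third, I would prove the \emph{convergence} statement. Fix a sequence $\delta_n\to0$ and set $\alpha_n:=\alpha(\delta_n)$. Comparing the minimizer $x_{\alpha_n}^{\delta_n}$ with the $\Omega$-minimizing solution $\vartheta^+$ (which satisfies $F(\vartheta^+)=y$, hence $S(F(\vartheta^+),y^{\delta_n})\leq\frac1r\delta_n^r$ by \eqref{noisy_data}), one gets the a priori estimate
\begin{displaymath}
\tfrac1r\big\|F(x_{\alpha_n}^{\delta_n})-y^{\delta_n}\big\|_\YY^r+\alpha_n\Omega(x_{\alpha_n}^{\delta_n})\leq\tfrac1r\delta_n^r+\alpha_n\Omega(\vartheta^+).
\end{displaymath}
Dividing by $\alpha_n$ and using $\delta_n^r/\alpha_n\to0$ shows $\limsup_n\Omega(x_{\alpha_n}^{\delta_n})\leq\Omega(\vartheta^+)$, so the iterates stay in a level set of $\Omega$ and admit a weakly convergent subsequence $x_{\alpha_{n_k}}^{\delta_{n_k}}\rightharpoonup\hat\vartheta$. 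From the same estimate $\|F(x_{\alpha_n}^{\delta_n})-y^{\delta_n}\|_\YY^r\leq\delta_n^r+r\alpha_n\Omega(\vartheta^+)\to0$ (using $\alpha_n\to0$), hence $F(x_{\alpha_{n_k}}^{\delta_{n_k}})\to y$ strongly; combined with $F(x_{\alpha_{n_k}}^{\delta_{n_k}})\rightharpoonup F(\hat\vartheta)$ this forces $F(\hat\vartheta)=y$, so $\hat\vartheta$ is a solution. Finally, weak lower semicontinuity of $\Omega$ gives $\Omega(\hat\vartheta)\leq\liminf_k\Omega(x_{\alpha_{n_k}}^{\delta_{n_k}})\leq\Omega(\vartheta^+)$, so $\hat\vartheta$ is an $\Omega$-minimizing solution.

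The main obstacle here is not any single estimate — all of the above is routine Banach-space regularization theory — but rather making sure the \emph{hypotheses are genuinely met in the Lebesgue-Bochner setting}: one must confirm reflexivity of $\YY$ (requiring $1<q<\infty$, arguably a gap in the stated assumptions), verify that the abstract penalty/level-set conditions on $\Omega$ are consistent with $\XX$ being the actual domain, and note that weak-to-weak sequential continuity of the composite $F[\vartheta(t)]=S(t)K[\vartheta(t)]$ on $\XX$ is an extra structural fact (it does \emph{not} follow merely from pointwise compactness of $K$ and boundedness of $S(t)$ without, e.g., a measurability/domination hypothesis on $t\mapsto S(t)$). So the honest write-up either assumes weak-to-weak continuity of $F$ outright (as the proposition's statement does) and then the proof is a direct citation of \cite{BENNING;BURGER:18, scherzerpoeschl:convergencerates}, or it must supply that continuity separately; I would take the former route and simply remark that the result is an instance of the cited abstract framework applied to the reflexive spaces $\XX$, $\YY$.
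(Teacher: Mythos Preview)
Your proposal is correct and in fact goes further than the paper does: the paper offers no proof of this proposition at all, merely prefacing it with the remark that one can ``use established results on regularization theory (see again \cite{BENNING;BURGER:18, kazimierski:tikhonov1, scherzerpoeschl:convergencerates, skhk12})'' and then stating the result. Your final paragraph lands on exactly this --- treat the proposition as a direct instance of the cited abstract framework applied to the reflexive spaces $\XX$, $\YY$ --- which is precisely the paper's approach; the existence and convergence arguments you sketch are the standard ones from those references, and your caveats about needing $1<q<\infty$ for reflexivity of $\YY$ and about the awkward phrasing of the hypotheses (the ``Then'' that should read ``Assume that'') are well taken.
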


It can be shown, that for the specific setting \eqref{F_bounded_compact} and $\Omega (\vartheta)$ being a power of the norm in $\XX$, the Tikhonov
method \eqref{Tikhonov-uniform} with data fitting term \eqref{S-uniform} yields a full regularization method for \eqref{ipdelta}.\\

\begin{theorem}\label{T-reg-bound-compact}
Let $\X$, $\Y$ are reflexive Banach spaces, $1<p<\infty$, and $F:\XX\to\YY$ be defined as in \eqref{F_bounded_compact}. Furthermore let the penalty
term $\Omega$ be defined as
\[   \Omega (\vartheta) := \frac{1}{q} \big\| \vartheta \|^q_{\XX},\qquad \vartheta\in\XX  . \]
Then $R:\YY\times (0,+\infty)\to \XX$, where $R(y^\delta,\alpha):=x_{\alpha}^\delta$ is the minimizer of the Tikhonov functional \eqref{Tikhonov-uniform} 
with data fitting term \eqref{S-uniform}, and a priori parameter choice $\alpha (\delta)$ as in \eqref{alpha-a-priori} is a full regularization method for \eqref{ipdelta} in the 
sense of Proposition \ref{P-Tikhonov-uniform}.\\
\end{theorem}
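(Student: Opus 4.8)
The plan is to derive the claim directly from Proposition~\ref{P-Tikhonov-uniform}: the data fitting term \eqref{S-uniform} and the a priori parameter choice \eqref{alpha-a-priori} already have the prescribed form, so it only remains to check that the operator $F$ from \eqref{F_bounded_compact} and the penalty $\Omega(\vartheta)=\frac1q\|\vartheta\|_{\XX}^q$ satisfy the structural hypotheses of that proposition, namely: $\XX$ is reflexive; $F:\mathcal{D}(F)\subset\XX\to\YY$ is weak-to-weak sequentially continuous; $\Omega$ is proper, convex and (weakly) lower semicontinuous with $\mathcal{D}(F)\cap\mathcal{D}(\Omega)\neq\emptyset$; and the sublevel sets $\mathcal{M}_\Omega=\{\vartheta\in\XX:\Omega(\vartheta)\le c\}$ are weakly sequentially precompact.

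First I would settle the properties of $\XX$ and of $\Omega$, which are essentially immediate. Since $\X$ is reflexive and $1<p<\infty$, the Lebesgue--Bochner space $\XX=L^p(0,T;\X)$ is reflexive (it is in fact uniformly convex whenever $\X$ is, cf.\ part~(iv) of the first proposition of Section~2). The functional $\vartheta\mapsto\frac1q\|\vartheta\|_{\XX}^q$ is finite on all of $\XX$, hence proper, with $\mathcal{D}(\Omega)=\XX$, so that $\mathcal{D}(F)\cap\mathcal{D}(\Omega)=\XX\neq\emptyset$ (a solution $\vartheta^+$ exists by the standing assumption \eqref{cond_exact}); it is continuous, hence lower semicontinuous, and convex as the composition of the norm with the nondecreasing convex map $s\mapsto\frac1q s^q$ on $[0,\infty)$ (recall $q\ge 1$); being convex and lower semicontinuous it is weakly lower semicontinuous. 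Finally $\mathcal{M}_\Omega$ is the closed ball of radius $(qc)^{1/q}$ in the reflexive space $\XX$, hence weakly sequentially compact by the Eberlein--\v{S}mulian theorem.

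The only point requiring genuine care is that $F$ is a well-defined bounded linear operator $\XX\to\YY$; once this is established, weak-to-weak sequential continuity is automatic, since for a bounded linear operator $\vartheta_n\rightharpoonup\vartheta$ in $\XX$ implies $\langle\psi,F\vartheta_n\rangle=\langle F^*\psi,\vartheta_n\rangle\to\langle F^*\psi,\vartheta\rangle=\langle\psi,F\vartheta\rangle$ for every $\psi\in\YY^*$. For boundedness one uses that $K$, being compact, is in particular bounded, together with the uniform estimate \eqref{S_uni_bounded}: for $\vartheta\in\XX$ the map $t\mapsto S(t)K\vartheta(t)$ is strongly measurable and satisfies the pointwise bound $\|S(t)K\vartheta(t)\|_{\Y}\le c_S\|K\|\,\|\vartheta(t)\|_{\X}$, so that $\|F\vartheta\|_{\YY}\le c_S\|K\|\,\|\vartheta\|_{L^q(0,T;\X)}$; since $(0,T)$ has finite measure one has the continuous embedding $L^p(0,T;\X)\hookrightarrow L^q(0,T;\X)$ in the regime $q\le p$ implicit in \eqref{F_bounded_compact}, and hence $F\in\mathcal{L}(\XX,\YY)$ with $\mathcal{D}(F)=\XX$. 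With all hypotheses of Proposition~\ref{P-Tikhonov-uniform} verified, that proposition applies verbatim and shows that $R(y^\delta,\alpha)=x_\alpha^\delta$ with the choice \eqref{alpha-a-priori} is a full regularization method for \eqref{ipdelta}. I expect the main obstacle to be nothing deeper than this bookkeeping around the well-definedness of $F$ — in particular the strong measurability of $t\mapsto S(t)K\vartheta(t)$ and the interplay of the exponents $p$ and $q$ on the finite time interval — since the compactness of $F$, which is what causes the ill-posedness, plays no further role once the abstract hypotheses are in place.
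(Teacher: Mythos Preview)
Your proposal is correct and follows essentially the same route as the paper: both proofs reduce the claim to verifying the hypotheses of Proposition~\ref{P-Tikhonov-uniform}, and the only substantive point is the weak-to-weak sequential continuity of $F$. The paper establishes this by directly computing the dual pairing $\langle y^*,F\vartheta_n\rangle=\int_0^T\langle K^*S(t)^*y^*(t),\vartheta_n(t)\rangle\,\dd t$ and identifying $t\mapsto K^*S(t)^*y^*(t)$ as an element of $\XX^*$, whereas you argue that $F$ is bounded and linear (using $\|S(t)K\vartheta(t)\|_{\Y}\le c_S\|K\|\,\|\vartheta(t)\|_{\X}$) and then invoke that bounded linear operators are automatically weak-to-weak continuous; these are two sides of the same coin, and your explicit flagging of the implicit embedding $L^p(0,T;\X)\hookrightarrow L^q(0,T;\X)$ (i.e.\ $q\le p$) is a point the paper leaves tacit.
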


\begin{proof}
Since every bounded set in a Lebesgue-Bochner space $\XX$ has a weakly converging subsequence, it immediately follows that the level sets $\mathcal{M}_c (\Omega)$
are weakly sequentially pre-compact.\\
It remains to show that $F$ is weak-to-weak sequentially continuous. Let $\{\vartheta_n\}\subset \XX$ be a sequence with $\vartheta_n \rightharpoonup \vartheta$
weakly as $n\to\infty$ to a limit $\vartheta\in\XX$. Since $\XX^* \cong L^{p^*}(0,T;\X^*)$ and $\YY^* \cong L^{q^*}(0,T,\Y^*)$ we have for every $y^\ast\in
L^{q^*} (0,T;\Y^\ast)$ that
\begin{eqnarray*}
\langle y^*, F(\vartheta_n) \rangle_{\YY^*\times \YY} &=& \int_0^T \langle y^* (t), S(t) K[\vartheta_n (t)] \rangle_{\Y^*\times \Y}\dd t\\
&=&\int_0^T \langle K^* S(t)^* y^* (t),\vartheta_n (t) \rangle_{\X^*\times \X}\dd t
\end{eqnarray*} 
converges to
\begin{equation}\label{eq-weak-conv} 
\int_0^T \langle K^* S(t)^* y^* (t),\vartheta (t) \rangle_{\X^*\times \X} = \langle y^\ast,F(\vartheta) \rangle_{\YY^*\times \YY} 
\end{equation}
as $n\to\infty$ due to the weak convergence of $\vartheta_n \rightharpoonup \vartheta$. Equation \eqref{eq-weak-conv} proves that $K^* S(t)^* y^* (t)\in \XX^*$
and hence can be represented by a function from $L^{p^*}(0,T;\X^*)$.This shows the weak sequential continuity of $F$
and the assertion follows from Proposition \ref{P-Tikhonov-uniform}. 
\end{proof}

%{\color{red} Andere Wahl von $S$? Weitere Aspekte?}

%{\color{black}
\subsection{Kaczmarz-based regularization for problems with static source}
We furthermore want to emphasize that inverse problems with time-dependent data and/or time-dependent forward operator are often formulated in a semi-discrete setting
\begin{equation} \label{eq:semi-discrete}
 F_i(x) = y_i, \quad i = 0,...,N-1,
\end{equation}
where the indices $i$ refer to discrete time points $t_i \in [0,T]$ or sections $I_i := [t_i,t_{i+1}]$ of the time interval $[0,T]$ at which the measurements are taken. The parameter that is to be identified is static in this setting. These problems are usually temporally or locally ill-posed and can be regularized using Kaczmarz's method, possibly in combination with other iterative methods such as the Landweber technique \cite{ttnn19} or sequential subspace optimization \cite{bhw20}. We introduce three scenarios that have been addressed in the literature and which fit into this framework.\\

\begin{example}
 We consider magnetic particle imaging. If the concentration $c$ of magnetic particles inside a body $\Omega$ is static, i.e., independent of time, the problem of reconstructing $c$ from measurements of the induced voltages $v_k$, $k=1,...,K$, is formulated as
 \begin{displaymath}
  v_k(t) = S_k(t) K_k[c](t), \quad k=1,\ldots ,K, 
 \end{displaymath}
 with
 \begin{displaymath}
  K_k[c](t) = \int_{\Omega} c(x) s_k(x,t) \,\mathrm{d}x
 \end{displaymath}
 and
 \begin{displaymath}
  S_k(t) = \int_{0}^T \widetilde{a}_k(t-\tau) \int_{\Omega} c(x) s_k(x,\tau) \mathrm{d}x \, \mathrm{d}\tau.
 \end{displaymath}
 The measurements are taken at time instances $t_i \in [0,T]$, $i=1,...,N$. We thus have a time-dependent forward operator 
 \begin{displaymath}
  F = \left(S_k K_k\right)_{k=1,...,K} \, : \, L^2(\Omega) \to L^2(0,T;\mathbb{R}^3)
 \end{displaymath}
 and time-dependent data $v \in L^2(0,T;\mathbb{R}^3)$.\\
\end{example}

\begin{example}
 In \cite{kaltenbacher17, ttnn19}, initial boundary value problems of the form
 \begin{align*}
  \partial_t u &= f(t,u(t),\vartheta) && \text{in } (0,T) \times \Omega, \\
             u &= 0                && \text{on } (0,T) \times \partial\Omega, \\
          u(0) &= u_0              && \text{in } \lbrace 0 \rbrace \times \Omega, \\
             y_i &= C_i u,          && i = 1,...,N,
 \end{align*}
 are considered, where the parameter $\vartheta \in \mathcal{X}$ is to be identified from measurements $y_i = C_i u := (C u(t_i))$ at time instances $t_i \in (0,T)$ of the state function 
 \begin{displaymath}
  u \in W^{1,p,p^*}(0,T;V,V^*) = \left\lbrace v \in L^p(0,T;V) \, : \, \partial_t v \in L^{p^*}(0,T;V^*) \right\rbrace \subseteq C(0,T; H).
 \end{displaymath}
 In particular, the parameter $\vartheta \in \mathcal{X}$ is assumed to be independent of time. In a Hilbert space setting ($p=p^*=2$), we may choose, e.g., $\mathcal{X} = L^2(\Omega)$. \\
 In \cite{ttnn19}, the case $f(t,u(t),\vartheta) = \Delta u + \Phi(u) + \vartheta$ with a nonlinear function $\Phi$ is addressed. The respective initial boundary value problems arise in several applications. For instance, the choice $\Phi(u) = u(1-u^2)$ is related to superconductivity, see \cite{ttnn19}.
\end{example}

\vspace*{2ex}

\begin{example}
 It is also possible to include the time-dependence, for example a motion or a deformation, in the mathematical model while the parameter that is to be reconstructed is considered static. This is the case in dynamic CT, where a known deformation of the investigated object can be incorporated in the forward operator, yielding a Radon transform along curves, see, e.g., \cite{bh2014}. %In this case it is again an option to use a regularization technique based on Kaczmarz iteration \cite{bhw20}.
\end{example}

\vspace*{2ex}

As already mentioned, the respective semi-discrete problems \eqref{eq:semi-discrete} 
% The measurements at discrete time points $t_i$ yield a semi-discrete setting
% \begin{displaymath}
%  F_i (\vartheta) = y_i, \quad i=1,...,N,
% \end{displaymath}
% which 
can be solved iteratively, for example by a combination of Kaczmarz' method with the Landweber iteration, see, e.g., \cite{mhalos07, kaltenbacher17, ttnn19}. The iteration reads
\begin{displaymath}
 \vartheta_{n+1} = \vartheta_n - \big(F_{[n]}'(\vartheta_n)\big)^* \big(F_{[n]}(\vartheta_n) - y_{[n]}\big) 
\end{displaymath}
and is stopped at $n_*$ with $[n_*] = [0]$ when the adapted discrepancy principle
\begin{displaymath}
 \left\lVert F_{[n]}(\vartheta_n) - y_{[n]} \right\rVert \leq \tau_{[n]} \delta_{[n]}
\end{displaymath}
is fulfilled for all $n = n_* - N, ..., n_*-1$.
Instead of using only one search direction, it is also possible to use multiple search directions
\begin{displaymath}
 \vartheta_{n+1} = \vartheta_n - \sum_{k \in I_{n}} t_{n,k} \big(F_{[n]}'(\vartheta_{k})\big)^* \big(F_{[n]}(\vartheta_k) - y_{[n]}\big),
\end{displaymath}
which has been proposed in \cite{bhw20}. In this sense, one loop through all time-instances corresponds to one full iteration, in which the static source $\vartheta$ is reconstructed.

%}

%%%%%%%%%%%%%%%%%%%%%%%%%%%%%%%%%%%%%%%%%%%%%%%%%%%%%%%%%%%%%%%%%%%%%%%%%%%%%%%

%%%%%%%%%%%%%%%%%%%%%%%%%%%%%%%%%%%%%%%%%%%%%%%%%%%%%%%%%%%%%

\section{Conclusion and outlook}

In this article we developed two novel concepts for ill-posedness and regularization of time-dependent inverse problems, i.e., the stable computation of time-dependent parameters from data varying in time. For such problems a mathematical setup in Lebesgue-Bochner spaces is convenient since, e.g., solutions of hyperbolic or parabolic PDEs show different regularities in time and space. But classical treatments of inverse problems usually rely on static Hilbert and Banach spaces. For both concepts, the pointwise (temporal) and uniform ill-posedness and regularization, we gave examples such as temporal observations of compact operators, variational tracking, Tikhonov and Kaczmarz-based methods. Future research will further extend these theoretical findings to more general Bochner spaces and aims at an integrated treatment of time-dependent inverse problems in the linear and nonlinear regimes. Last but not least the theoretical fundaments have to be supported by applications.

%%%%%

\bibliographystyle{siam}
\bibliography{bibliography_dip,references-hyperelastic}

\end{document}